\definecolor{seagreen}{RGB}{46,139,87}
\definecolor{maroon}{RGB}{128,0,0}
\definecolor{darkviolet}{RGB}{148,0,211}
\definecolor{twelve}{RGB}{100,100,170}
\definecolor{thirteen}{RGB}{100,150,50}
\definecolor{fourteen}{RGB}{200,0,0}
\definecolor{fifteen}{RGB}{0,200,0}
\definecolor{sixteen}{RGB}{0,0,200}
\definecolor{seventeen}{RGB}{200,0,200}
\definecolor{eighteen}{RGB}{0,200,200}
\newcommand{\bb}[1]{\mathbb{#1}}
\newcommand{\es}[1]{\EuScript{#1}}
\renewcommand{\sf}[1]{{\mathsf{#1}}}
\DeclareMathOperator{\colim}{\mathsf{colim}}
\newcommand{\s}{{\sf{Sp}}}
\newcommand{\Fin}{\sf{Fin}}
\newcommand{\Set}{\sf{Set}}
\newcommand{\Fun}{\sf{Fun}}
\DeclareMathOperator{\Map}{\mathsf{Map}}
\newcommand{\op}{\mathsf{op}}
\renewcommand{\inf}{\mathsf{inf}}
  \newcommand{\adjunction}[4]{
\xymatrix{
#1:#2 \ar@<.5ex>[r] &
\ar@<.5ex>[l] #3:#4
}}
\newtheorem{thm}{Theorem}[section]
\newtheorem{lem}[thm]{Lemma}
\newtheorem{cor}[thm]{Corollary}
\newtheorem{xxthm}{Theorem}
\theoremstyle{definition}
\newtheorem{definition}[thm]{Definition}
\newtheorem{ex}[thm]{Example}
\newtheorem{rem}[thm]{Remark}
\begin{document}
\title{Deconstructing span categories for profinite groups}

\author{David Barnes}
\address[Barnes]{Queen's University Belfast}
\email{d.barnes@qub.ac.uk}

\author{Niall Taggart}
\address[Taggart]{Queen's University Belfast}
\email{n.taggart@qub.ac.uk}

\begin{abstract}
One of the major advantages of $\infty$-category theory over classical $1$-category theory is its robust and homotopically meaningful framework for taking (co)limits of diagrams of $\infty$-categories. However, it is both subtle and crucial to specify which variant of the $\infty$-category of $\infty$-categories is being used when forming such (co)limits. In this article, we present a concrete case study illustrating how (co)limits of $\infty$-categories behave in a specific setting. We demonstrate that the span category of a profinite group can be realised as the colimit of the span categories of its quotients by open normal subgroups and provide a number of applications to the world of equivariant (higher) algebra.
\end{abstract}
\maketitle

\section{Introduction}

Since the inception of $\infty$-category theory one thing the community has always agreed upon was that a major advantage of $\infty$-categories over more classical approaches to homotopy theory is the ability to consider limits and colimits of $\infty$-categories in a homotopically meaningful and consistent fashion. For example, the literature is abound with tools and techniques for putting model structures on homotopy limits of diagrams of model categories under rather technical constraints, and the idea of homotopy colimits of model categories to this day remains subtle to the point of there being no good theory yet developed, but in the world of $\infty$-categories, these (co)limits becomes formal once you have a good $\infty$-category of $\infty$-categories. The latter exists, unlike a good model category of model categories.

Consider, for example, the stable homotopy category, $\es{SH}$. Since the dawn of time, homotopy theorists have agreed that the stable homotopy category should be formed from the category of pointed spaces $\es{S}_\ast$ by formally inverting the spheres. One possibly interpretation of this characterisation is that the stable homotopy category is 
\[
\es{SH} = \lim (\cdots \xrightarrow{\ \Omega \ } \es{S}_\ast \xrightarrow{\ \Omega \ } \es{S}_\ast)
\]
the limit of iterated applications of the loops endofunctor on pointed spaces. It is possible to prove this statement within the world of model categories, but in many ways such a proof largely reduces to luck: we are lucky that there is a convenient model category of pointed spaces satisfying the technical conditions requires for a homotopy limit model structure, we are lucky that iterating loops provides a sequential diagram of right Quillen functors, lucky that there exists a good model category of spectra, and lucky that we may construct, by hand, a Quillen equivalence between a model category of spectra and the homotopy limit model structure. 

In the $\infty$-category world we can still see the stable homotopy category as inverting spheres even on days when our luck has run out. We can define the $\infty$-category of spectra $\s$ to be 
\[
\s \coloneq \lim (\cdots \xrightarrow{\ \Omega \ } \es{S}_\ast \xrightarrow{\ \Omega \ } \es{S}_\ast)
\]
the limit in the $\infty$-category of $\infty$-categories of the loops functor on the $\infty$-category of pointed spaces, and show with little work that the underlying $\infty$-category of any good model category of spectra satisfies the same universal property as our limit construction: both are the free stable $\infty$-category generated under colimits by the one-point space.

The limit above is taken in the $\infty$-category of $\infty$-categories, but a little more is true. For instance, the $\infty$-category of pointed spaces is presentable, and the loops functor is a right adjoint, so this limit could equally well be computed in the $\infty$-category $\sf{Pr}^R$ of presentable $\infty$-categories and right adjoints. Passing to the associated diagram of left adjoints gives a diagram in the $\infty$-category $\sf{Pr}^L$ of presentable $\infty$-categories and left adjoints, and we can interpret the $\infty$-category of spectra as
\[
\s \simeq \colim (\es{S}_\ast \xrightarrow{\ \Sigma \ } \es{S}_\ast \xrightarrow{\ \Sigma \ } \cdots)
\]
the colimit in $\sf{Pr}^L$  of iterated applications of the suspension functor. This latter description is much more fitting with the idea that the stable homotopy category is given by inverting the spheres. 

The practice of taking (co)limits of $\infty$-categories in some variant of the $\infty$-category of $\infty$-categories, e.g., $\sf{Cat}_\infty$, $\sf{Pr}^L$, or $\sf{Pr}^R$, is now ubiquitous throughout homotopy theory with striking applications, including to: Hermitian $K$-theory~\cite{HermitianI, HermitianII, HermitianIII}, equivariant stable homotopy theory~\cite{LNP, BBB}, and continuous $K$-theory~\cite{Efimov}.  

Despite the ubiquity of these constructions, a down-to-earth approachable, example driven exposition on these constructions is absent from the literature. It is precisely this gap that this article aims to address. We provide a general introduction to the theory of limits and colimits of $\infty$-categories, highlighting how limits and colimits are computed in many of the variants of the $\infty$-category of $\infty$-categories. 

To show how the reader may apply this theory we have a single leading example, the span category of a profinite group. A profinite group is a compact, Hausdorff, totally disconnected topological group. These topological groups admit a more algebraic description: namely a profinite group $G$ may be deconstructed as
\[
G \cong \lim_{N \leq_o G} G/N
\]
a limit of its quotients by its open normal subgroups.  Our first application of the study of (co)limits of $\infty$-categories is a categorification of this result to categories of $G$-sets. 

\begin{xxthm}[\cref{thm: GSet as colim}]\label{main thm: GSet colim}
Let $G$ be a profinite group. The collection of inflation functors\footnote{The $N$-inflation functor arises from the quotient of groups $G \to G/N$ giving a finite $G/N$-set an action of $G$ by precomposition.} running over the set of open and normal subgroups of $G$ assembles into an equivalence of $\infty$-categories
\[
\underset{N \trianglelefteq_o G}{\colim}~\sf{Fin}_{G/N} \simeq \sf{Fin}_G^\delta,
\]
between the colimit (in $\sf{Cat}_\infty$) of the $\infty$-categories of finite $G/N$-sets and the $\infty$-category of finite \emph{discrete} $G$-sets.
\end{xxthm}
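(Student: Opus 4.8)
The plan is to construct the comparison functor from the universal property of the colimit and then verify it is an equivalence by checking essential surjectivity and full faithfulness, using the explicit behaviour of filtered colimits in $\sf{Cat}_\infty$. First I would record that the indexing poset of open normal subgroups, ordered by reverse inclusion, is filtered: given $N_1, N_2 \trianglelefteq_o G$ the intersection $N_1 \cap N_2$ is again open and normal and refines both quotients. The inflation functors along the surjections $G/N' \twoheadrightarrow G/N$ for $N' \subseteq N$ are strictly compatible with composition, so they assemble into a diagram $\sf{Fin}_{G/(-)} \colon (\{N \trianglelefteq_o G\}, \supseteq) \to \sf{Cat}_\infty$ valued in $1$-categories, together with a cocone to $\sf{Fin}_G^\delta$. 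Writing $\mc{C}$ for the colimit, the universal property yields the comparison functor
\[
F \colon \mc{C} = \colim_{N \trianglelefteq_o G} \sf{Fin}_{G/N} \longrightarrow \sf{Fin}_G^\delta .
\]

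The key technical input is that the core functor and the mapping-space functors on $\sf{Cat}_\infty$ commute with filtered colimits. This holds because filtered colimits in $\sf{Cat}_\infty$ are computed levelwise on underlying complete Segal spaces: filtered colimits of spaces commute with finite limits, so both the Segal and the completeness conditions are preserved, and the object space and mapping spaces (being finite limits of the low-dimensional levels) are the filtered colimits of those of the $\sf{Fin}_{G/N}$. In particular, since each $\sf{Fin}_{G/N}$ is a $1$-category and filtered colimits of discrete spaces are discrete, $\mc{C}$ is again a $1$-category, and it suffices to check that $F$ is essentially surjective and fully faithful in the classical sense.

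For essential surjectivity I would use that a finite discrete $G$-set $X$ carries a continuous action, so the kernel $N$ of $G \to \sf{Sym}(X)$ is an open normal subgroup; thus $X$ lies in the image of the $N$-inflation functor and hence is hit by $F$. For full faithfulness, present two objects at a common stage $N_0$ (possible by filteredness). The levelwise formula gives
\[
\Map_{\mc{C}}(x,y) \;\simeq\; \colim_{N \subseteq N_0} \Hom_{G/N}(x_N, y_N),
\]
a filtered colimit of sets whose transition maps are the inflations $\Hom_{G/N}(x_N, y_N) \to \Hom_{G/N'}(x_{N'}, y_{N'})$. These are bijections, because a $G/N'$-equivariant map between sets inflated from $G/N$ descends uniquely to $G/N$ (the subgroup $N/N'$ acts trivially). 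Hence the colimit stabilises, and the resulting map to $\Hom_G(Fx, Fy)$ is a bijection, since a $G$-equivariant map between finite $G$-sets on which $N$ acts trivially is exactly a $G/N$-equivariant map. Together these show $F$ is an equivalence.

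The main obstacle is the technical input in the second step: one must be confident that filtered colimits in $\sf{Cat}_\infty$ genuinely commute with the formation of mapping spaces and cores, rather than merely receiving a comparison map from the levelwise construction, and that this forces $\mc{C}$ to be $1$-truncated. Once that is in hand, the remaining verifications reduce to the elementary bookkeeping of inflation of finite $G/N$-sets recorded above.
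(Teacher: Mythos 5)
Your proposal is correct and follows essentially the same route as the paper: both construct the comparison functor out of the cocone of inflation functors and then verify essential surjectivity (via openness of an open normal subgroup acting trivially on a finite discrete $G$-set) and full faithfulness (via the description, recalled in the paper's Section 2 from Rozenblyum/Kerodon, of mapping spaces in a filtered colimit in $\sf{Cat}_\infty$ as filtered colimits of mapping spaces). Your extra observations — that the transition maps on hom-sets are bijections so the colimit stabilises, and that the colimit is again a $1$-category — are a slightly more careful rendering of the paper's claim that a map of $N$-fixed sets already appears at stage $N$ and furnishes an inverse on mapping sets.
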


This may seem in contrast to the limit description of a profinite group, but upon universally adjoining filtered colimits, i.e., passing to Ind-completions, and examining how this process interacts with (co)limits of $\infty$-categories, we obtain a description analogous to that of a profinite group as a limit of its quotients by open normal subgroups.

\begin{xxthm}[{\cref{cor: GSet as limit}}]\label{main thm: GSet lim}
Let $G$ be a profinite group. The collection of fixed points functors\footnote{The $N$-fixed points functor sends a finite $G$-set to the set of elements fixed by every element of $N$, this latter set has a residual action by $G/N$.} assembles into an equivalence of $\infty$-categories 
\[
\Set_G^\delta \simeq \underset{N \trianglelefteq_o G}{\lim}~\Set_{G/N}
\]
between the $\infty$-category of discrete $G$-sets and the limit (in $\sf{Pr}^R$) of the $\infty$-categories of $G/N$-sets. Dually, there is an equivalence of $\infty$-categories
\[
\underset{N \trianglelefteq_o G}{\colim}~\Set_{G/N} \simeq \Set_G^\delta
\]
between the colimit (in $\sf{Pr}^L$) of the $\infty$-categories of  $G/N$-sets and the $\infty$-category of discrete $G$-sets.
\end{xxthm}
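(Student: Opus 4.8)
The plan is to deduce both equivalences from \cref{main thm: GSet colim} by passing to Ind-completions and then exploiting the duality between $\sf{Pr}^L$ and $\sf{Pr}^R$. Two facts drive the argument. First, the $\infty$-category of discrete $G$-sets is the Ind-completion of its finite objects, $\Set_G^\delta \simeq \Ind(\sf{Fin}_G^\delta)$, and likewise $\Set_{G/N} \simeq \Ind(\sf{Fin}_{G/N})$ for each open normal $N$. This is because every discrete $G$-set is a disjoint union of its orbits, each of which is finite (open stabiliser in a compact group), so $\Set_G^\delta$ is freely generated under filtered colimits by $\sf{Fin}_G^\delta$, whose objects are exactly the compact ones; the same holds for $\Set_{G/N}$ as $G/N$ is finite. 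Second, $\Ind$, regarded as a functor from small $\infty$-categories admitting finite colimits (with finite-colimit-preserving functors between them) to $\sf{Pr}^L$, is a left adjoint and therefore preserves colimits. I would also note that the indexing poset of open normal subgroups is cofiltered under inclusion, so the relevant diagram is filtered; this makes the Ind-completion especially well behaved, since filtered colimits along compact-object-preserving functors are computed within the compactly generated part of $\sf{Pr}^L$.

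For the colimit statement I would first check that $\sf{Fin}_{G/N}$ and $\sf{Fin}_G^\delta$ admit finite colimits (finite disjoint unions and finite quotients) and that the inflation functors preserve these together with compact objects, so that \cref{main thm: GSet colim} and its cocone maps live in the domain where $\Ind$ preserves colimits. Applying $\Ind$ and using colimit-preservation then yields
\[
\colim_{N \trianglelefteq_o G} \Set_{G/N} \;\simeq\; \colim_{N \trianglelefteq_o G} \Ind(\sf{Fin}_{G/N}) \;\simeq\; \Ind\!\left(\colim_{N \trianglelefteq_o G}\sf{Fin}_{G/N}\right) \;\simeq\; \Ind(\sf{Fin}_G^\delta) \;\simeq\; \Set_G^\delta
\]
as an equivalence in $\sf{Pr}^L$, whose cocone maps are the $\Ind$-extensions of the inflation functors, that is, the inflation functors on all discrete $G$-sets (these commute with filtered colimits). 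This establishes the second, dual equivalence.

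For the limit statement I would invoke the equivalence $\sf{Pr}^L \simeq (\sf{Pr}^R)^{\op}$ sending a left adjoint to its right adjoint, under which the colimit cocone just produced becomes a limit cone in $\sf{Pr}^R$. It remains only to identify the structure maps: the right adjoint of the $N$-inflation functor $\Set_{G/N} \to \Set_G^\delta$ is the $N$-fixed points functor $(-)^N \colon \Set_G^\delta \to \Set_{G/N}$, since for a $G/N$-set $Y$ and a discrete $G$-set $X$ any $G$-map out of the inflation of $Y$ (on which $N$ acts trivially) factors through $X^N$ and is precisely a $G/N$-map $Y \to X^N$. Hence
\[
\Set_G^\delta \;\simeq\; \lim_{N \trianglelefteq_o G}\Set_{G/N}
\]
in $\sf{Pr}^R$, with structure maps the fixed-point functors, as claimed.

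The main obstacle is the bookkeeping around $\Ind$: one must ensure the diagram of \cref{main thm: GSet colim} genuinely lives where colimit-preservation of $\Ind$ applies (small categories with finite colimits and finite-colimit-preserving, compact-object-preserving functors), and that the identifications $\Set_G^\delta \simeq \Ind(\sf{Fin}_G^\delta)$ and $\Set_{G/N}\simeq\Ind(\sf{Fin}_{G/N})$ match compact objects on the nose, so that the presentable categories produced are the intended ones. The remaining subtlety is checking that the $\sf{Pr}^L \simeq (\sf{Pr}^R)^{\op}$ duality carries the explicit inflation cocone to the explicit fixed-point cone, which reduces to the adjunction computation indicated above.
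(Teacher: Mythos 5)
Your proposal is correct and follows essentially the same route as the paper: apply $\sf{Ind}$ to the equivalence of Theorem~\ref{main thm: GSet colim}, use that $\sf{Ind}\colon \sf{Cat}_\infty^{\rm rex} \to \sf{Pr}^L$ preserves colimits (being a localization/left adjoint), identify $\sf{Ind}(\sf{Fin}_{G/N}) \simeq \Set_{G/N}$ and $\sf{Ind}(\sf{Fin}_G^\delta) \simeq \Set_G^\delta$, and pass through the duality $(\sf{Pr}^L)^{\op} \simeq \sf{Pr}^R$ to identify the structure maps with the fixed-point functors via the inflation/fixed-points adjunction. The only difference is order of presentation (you prove the $\sf{Pr}^L$ colimit statement first and dualize, while the paper packages the same two steps into its Lemma~\ref{lem: Ind of colim} and states the $\sf{Pr}^R$ limit first), which is immaterial.
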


Another key player in many recent advances in homotopy theory is the $\infty$-category of spans, sometimes also referred to as the category of correspondences or the effective Burnside category. This is the $\infty$-categorical enhancement of Lindner's \cite{Lindner} $(2,1)$-category of spans. Applications abound much of modern homotopy theory including: algebraic $K$-theory \cite{BGT}, equivariant homotopy theory \cite{BarwickGlasmanShah}, motivic homotopy \cite{BachmannHoyois}, higher algebra \cite{Harpaz} and six-functor formalisms \cite{Scholze}. We also give an accessible survey of the construction of the $\infty$-category of spans and show that when considered as a functor between suitable variants of the $\infty$-category of $\infty$-categories it commutes with limits and filtered colimits. This analysis when combined with~\cref{main thm: GSet colim} and~\cref{main thm: GSet lim} proves our next main theorem.

\begin{xxthm}[\cref{thm: span as limit} and \cref{thm: span as colim}]
Let $G$ be a profinite group. The collection of fixed point functors running over the set of open and normal subgroups of $G$ assembles into an equivalence of $\infty$-categories
\[
\sf{Span}(\Set_G^\delta) \simeq \underset{N \trianglelefteq_o G}{\lim}~\sf{Span}(\Set_{G/N})
\]
between the limit (in $\sf{Cat}_\infty$) of the $\infty$-categories of $G/N$-sets and the $\infty$-category of discrete $G$-sets. Moreover, the collection of inflation functors running over the set of open and normal subgroups of $G$ assembles into an equivalence of $\infty$-categories
\[
\underset{N \trianglelefteq_o G}{\colim}~\sf{Span} (\Fin_{G/N}) \simeq \sf{Span}(\Fin_G^\delta),
\]
between the colimit (in $\sf{Cat}_\infty$) of the $\infty$-categories of finite $G/N$-sets and the $\infty$-category of finite discrete $G$-sets. These equivalences are not dual to each other.
\end{xxthm}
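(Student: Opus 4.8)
The plan is to feed the two descriptions of $G$-set categories from \cref{main thm: GSet colim} and \cref{main thm: GSet lim} into the commutation properties of the span construction established earlier in the paper, namely that $\sf{Span}(-)$ preserves limits of pullback-preserving diagrams and preserves filtered colimits.

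For the limit statement I would start from the equivalence $\Set_G^\delta \simeq \lim_N \Set_{G/N}$ in $\sf{Pr}^R$ provided by \cref{main thm: GSet lim}. Since limits in $\sf{Pr}^R$ are created by the forgetful functor to $\sf{Cat}_\infty$, this is simultaneously a limit in $\sf{Cat}_\infty$ whose structure functors are the fixed-point functors. The key observation is that these are right adjoints, hence preserve pullbacks, so the diagram lies in the subcategory of $\infty$-categories with pullbacks and pullback-preserving functors, on which $\sf{Span}(-)$ is defined and commutes with limits. Commuting $\sf{Span}(-)$ past the limit then gives
\[
\sf{Span}(\Set_G^\delta) \;\simeq\; \sf{Span}\Bigl(\lim_N \Set_{G/N}\Bigr) \;\simeq\; \lim_N \sf{Span}(\Set_{G/N})
\]
in $\sf{Cat}_\infty$, as required.

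For the colimit statement I would instead use $\colim_N \sf{Fin}_{G/N} \simeq \sf{Fin}_G^\delta$ in $\sf{Cat}_\infty$ from \cref{main thm: GSet colim}. The indexing poset of open normal subgroups under reverse inclusion is filtered---the intersection of two open normal subgroups is again open and normal---and the structure functors are the inflation functors, which act as the identity on underlying sets and therefore preserve pullbacks. Commuting $\sf{Span}(-)$ past this filtered colimit yields
\[
\colim_N \sf{Span}(\sf{Fin}_{G/N}) \;\simeq\; \sf{Span}\Bigl(\colim_N \sf{Fin}_{G/N}\Bigr) \;\simeq\; \sf{Span}(\sf{Fin}_G^\delta)
\]
in $\sf{Cat}_\infty$. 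The restriction to \emph{finite} $G$-sets is not cosmetic: the commutation with filtered colimits rests on each level of the span construction being carved out of a functor category on a finite indexing category by finitely many Cartesian conditions, and it is exactly this finiteness that is detected at the finite stages of the colimit.

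The claim that the two equivalences are not dual is explained by the self-duality $\sf{Span}(\mc{C})^{\op} \simeq \sf{Span}(\mc{C})$ obtained by reversing spans. Because the opposite involution is an autoequivalence of $\sf{Cat}_\infty$ and so preserves limits, applying $(-)^{\op}$ to the limit statement and then invoking self-duality merely reproduces the limit statement; it does not manufacture a colimit. In particular one cannot obtain the colimit equivalence over $\sf{Fin}$ from the limit equivalence over $\Set$ by any formal duality, and the two results genuinely involve different underlying categories and different structure functors. The substance therefore lies in the two input commutation results: for limits, checking that a square in a limit $\infty$-category is Cartesian precisely when each projection is Cartesian (using pullback-preservation of the transition functors); and for filtered colimits, checking that both the functor categories on finite shapes and the Cartesian conditions defining the span levels are stable under the colimit. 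This last finiteness analysis is the step I expect to be the most delicate.
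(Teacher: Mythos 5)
Your argument is correct and is essentially the paper's own proof: the limit statement is \cref{cor: GSet as limit} combined with \cref{lem: span preserves limits}, and the colimit statement is \cref{thm: GSet as colim} combined with \cref{lem: span preserves filtered colimits}, with \cref{lem: cocone in PrL is cone in PrR} used to identify the functors as fixed points and inflations respectively. Your side commentary diverges slightly from the paper's: it explains the non-duality by the non-presentability of span $\infty$-categories and the failure of adjunctions to descend to span categories (\cref{rem: adjoints}) rather than by self-duality of $\sf{Span}(-)$, and the restriction to \emph{finite} sets in the colimit statement is forced by the input equivalence \cref{thm: GSet as colim} (an infinite discrete $G$-set need not be fixed by a single open normal subgroup) rather than by any finiteness internal to the span construction, which preserves filtered colimits of left exact diagrams regardless---but neither point affects the validity of your proof.
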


We give an application of our narrative to the study of equivariant (higher) algebra. In stable homotopy theory, there is a fully faithful embedding of abelian groups into the stable homotopy category. This follows from the fact that to any abelian group $A$ one may assign the Eilenberg-MacLane spectrum $HA$, whose homotopy groups recover $A$. In the equivariant world, abelian groups are replaced with Mackey functors of abelian groups. These are systems of abelian groups related by restriction, transfer and conjugation maps. For instance, a Mackey functor of abelian groups $M$ for the cyclic group $C_6$ of order $6$ may be presented via a Lewis diagram:
  \[
\begin{tikzcd}[row sep=2.2cm, column sep=2.2cm]
& M(C_6/C_6)
  \arrow[loop above, "e" description, looseness=8, out=135, in=45]
  \arrow[dr, "\sf{res}_{C_2}^{C_6}" description, bend left=35]
  \arrow[dl, "\sf{res}_{C_3}^{C_6}" description, bend right=35]
\\
M(C_6/C_3)
  \arrow[loop left, "C_6/C_3" description, looseness=8, out=225, in=135]
  \arrow[ur, "\sf{tr}_{C_3}^{C_6}" description, bend right=25]
  \arrow[dr, "\sf{res}_{C_1}^{C_3}" description, bend right=35]
&&
M(C_6/C_2)
  \arrow[loop right, "C_6/C_2" description, looseness=8, out=45, in=315]
  \arrow[ul, "\sf{tr}_{C_2}^{C_6}" description, bend left=25]
  \arrow[dl, "\sf{res}_{C_1}^{C_3}" description, bend left=35]
\\
& M(C_6/C_1)
  \arrow[loop below, "C_6" description, looseness=8, out=315, in=225]
  \arrow[ur, "\sf{tr}_{C_1}^{C_2}" description, bend left=25]
  \arrow[ul, "\sf{tr}_{C_1}^{C_3}" description, bend right=25]
\end{tikzcd}
\]
The category $\sf{Mack}(G; \sf{Ab})$ of $G$-Mackey functors (of abelian groups) embeds fully faithfully into the genuine equivariant stable homotopy category $\es{SH}_G$ via the equivariant Eilenberg-MacLane spectrum functor. In a more higher algebraic formulation, the $\infty$-category of equivariant spectra is the $\infty$-category of $G$-Mackey functors of spectra, i.e., there is an equivalence
\[
\es{SH}_G \simeq \sf{Mack}(G; \es{SH})
\]
see e.g., work of Barwick~\cite{Barwick} or Guillou and May~\cite{GuillouMay}. In the case when the group of equivariance is a profinite group, the stable homotopy theory has been well studied by Fausk~\cite{Fausk} and the firth author and their collaborators~\cite{BarnesSugrueClassifying, BBB}. Our final main result gives an alternative approach and complementary perspective to these known approaches. Moreover, this example allows us to again highlight the nuances in working with different variants of the $\infty$-category of $\infty$-categories and reinforces our message on the subtly between (co)limits in $\sf{Cat}_\infty$, $\sf{Pr}^R$ and $\sf{Pr}^L$.

\begin{xxthm}[{\cref{thm: Mackey functors as a limit}}]
Let $\es{E}$ be a presentable semiadditive $\infty$-category and let $G$ be a profinite group. The collection of categorical fixed point functors running over the set of open and normal subgroups of $G$ assembles into an equivalence of $\infty$-categories
\[
\sf{Mack}(\Fin^\delta_G~; \es{E}) \simeq \lim_{N \leq_o G} \sf{Mack}(\Fin_{G/N}~; \es{E})
\]
between the limit (in $\sf{Pr}^R$) of the $\infty$-categories of $G/N$-Mackey functors and the $\infty$-category of $G$-Mackey functors. 
Dually, there is an equivalence of $\infty$-categories 
\[
\underset{N \leq_o G}{\colim}~\sf{Mack}(\Fin_{G/N}~; \es{E}) \simeq \sf{Mack}(\Fin^\delta_G~; \es{E})
\]
between the colimit (in $\sf{Pr}^L$) of the $\infty$-categories of $G/N$-Mackey functors and the $\infty$-category of $G$-Mackey functors. If $\es{E}$ is not presentable, then the limit equivalence holds in the $\infty$-category $\widehat{\sf{Cat}}_\infty$ of potentially large $\infty$-categories, but the colimit equivalence is false.
\end{xxthm}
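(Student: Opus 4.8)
The plan is to reduce both equivalences to the colimit formula for span categories established in \cref{thm: span as colim} by applying the contravariant functor of product-preserving functors into $\es{E}$. By construction a $G$-Mackey functor valued in $\es{E}$ is a product-preserving functor out of the span category, so $\sf{Mack}(\Fin_G^\delta; \es{E}) = \Fun^\times(\sf{Span}(\Fin_G^\delta), \es{E})$ and likewise at each finite quotient $G/N$. The categorical $N$-fixed point functor is exactly restriction along the inflation $\iota_N \colon \sf{Span}(\Fin_{G/N}) \to \sf{Span}(\Fin_G^\delta)$, that is $\iota_N^\ast$, and the transition maps of the diagram are the restrictions along the inflations between finite levels.

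For the limit statement I would first recall the general fact that $\Fun(-, \es{E})$ sends colimits in $\widehat{\sf{Cat}}_\infty$ to limits; the only point requiring care is that this descends to product-preserving functors. Since the poset of open normal subgroups is filtered and every inflation preserves the biproducts (finite disjoint unions) of the span categories, the biproduct of two objects of $\sf{Span}(\Fin_G^\delta) \simeq \colim_N \sf{Span}(\Fin_{G/N})$ is computed at a common finite level, so a functor out of the colimit preserves products if and only if each of its restrictions does. This yields
\[
\sf{Mack}(\Fin_G^\delta; \es{E}) = \Fun^\times\!\big(\colim_N \sf{Span}(\Fin_{G/N}), \es{E}\big) \simeq \lim_N \Fun^\times\!\big(\sf{Span}(\Fin_{G/N}), \es{E}\big) = \lim_N \sf{Mack}(\Fin_{G/N}; \es{E})
\]
in $\widehat{\sf{Cat}}_\infty$, with projections the fixed point functors $\iota_N^\ast$. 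When $\es{E}$ is presentable each $\sf{Mack}(\Fin_{G/N}; \es{E})$ is presentable and each $\iota_N^\ast$ is a right adjoint (it has the inflation functor as left adjoint), so the diagram lives in $\sf{Pr}^R$; as the forgetful functor $\sf{Pr}^R \to \widehat{\sf{Cat}}_\infty$ preserves limits, the same equivalence computes the limit in $\sf{Pr}^R$.

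The colimit statement in the presentable case is then formal: under the duality $\sf{Pr}^L \simeq (\sf{Pr}^R)^{\op}$ a limit in $\sf{Pr}^R$ is carried to a colimit in $\sf{Pr}^L$ of the same underlying $\infty$-categories, with every transition and projection functor replaced by its left adjoint. The left adjoint of each fixed point functor $\iota_N^\ast$ is the corresponding inflation functor on Mackey functors, so this gives the asserted colimit equivalence in $\sf{Pr}^L$ with structure maps the inflations.

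The main obstacle is the final sentence, and it is precisely here that the distinction between the variants of the $\infty$-category of $\infty$-categories becomes essential. For arbitrary $\es{E}$ the limit equivalence survives verbatim in $\widehat{\sf{Cat}}_\infty$, since the argument above never invoked presentability. The colimit equivalence, however, was obtained only through the duality $\sf{Pr}^L \simeq (\sf{Pr}^R)^{\op}$, which has no counterpart in $\widehat{\sf{Cat}}_\infty$: a colimit in $\sf{Pr}^L$ is computed as the limit of the right adjoints and is \emph{not} the underlying colimit in $\widehat{\sf{Cat}}_\infty$. To show the statement genuinely fails I would compute the naive colimit $\colim_N \sf{Mack}(\Fin_{G/N}; \es{E})$ in $\widehat{\sf{Cat}}_\infty$ along the inflation functors: being filtered, each of its objects is a Mackey functor inflated from a single finite quotient $G/N$, whereas a typical $G$-Mackey functor (for instance a representable or Burnside-type one) carries genuinely non-trivial data at every level and is inflated from no finite quotient. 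Exhibiting such a Mackey functor shows the comparison functor fails to be essentially surjective, so the colimit equivalence is false once presentability, and with it the $\sf{Pr}$-duality, is dropped.
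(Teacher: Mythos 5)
Your proofs of the two displayed equivalences are correct and take essentially the paper's own route: you combine \cref{thm: span as colim} with the observation that product-preservation can be checked levelwise in a filtered colimit (the paper's \cref{lem: colim prod preserving}, feeding into \cref{lem: AdFun preserves lims}), identify the resulting equivalence $\sf{Mack}(\Fin_G^\delta;\es{E}) \simeq \lim_N \sf{Mack}(\Fin_{G/N};\es{E})$ in $\widehat{\sf{Cat}}_\infty$, upgrade it to $\sf{Pr}^R$ using presentability of the Mackey functor categories, and dualise to $\sf{Pr}^L$ exactly as in \cref{lem: cocone in PrL is cone in PrR}.

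The gap is in your treatment of the final claim, the failure of the colimit equivalence for non-presentable $\es{E}$ (which, it is worth noting, the paper asserts but never actually proves). There are two problems. First, when $\es{E}$ is not presentable the functors you propose to take a colimit along --- the left adjoints $(\iota_N)_!$ of the restriction functors $\iota_N^\ast$, where $\iota_N \colon \Fin_{G/N} \to \Fin_G^\delta$ is inflation --- are produced in the paper by the Adjoint Functor Theorem and need not exist at all, so ``the naive colimit along the inflation functors'' is not yet a well-defined diagram in $\widehat{\sf{Cat}}_\infty$; you must either explain what the structure maps are or restrict attention to those $\es{E}$ for which the left adjoints exist. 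Second, and more seriously, your proposed witnesses for the failure of essential surjectivity are exactly wrong: representable and Burnside-type Mackey functors are precisely objects that \emph{are} inflated from a finite level. Writing $X \otimes e$ for the Mackey functor corepresented by $X \in \sf{Span}(\Fin_G^\delta)$ and $e \in \es{E}$, characterised by $\Map(X \otimes e, F) \simeq \Map_{\es{E}}(e, F(X))$, the adjunction $(\iota_N)_! \dashv \iota_N^\ast$ together with the Yoneda lemma gives $(\iota_N)_!\bigl(X_N \otimes e\bigr) \simeq \iota_N(X_N) \otimes e$; since by \cref{thm: GSet as colim} every finite discrete $G$-set is inflated from some finite quotient $G/N$, every corepresentable --- in particular the Burnside Mackey functor, corepresented by the point --- lies in the essential image of some $(\iota_N)_!$. (For non-presentable $\es{E}$ such corepresentables may not even exist in $\sf{Mack}(\Fin_G^\delta;\es{E})$, which makes them doubly unsuitable as witnesses.) A genuine counterexample must carry data of unbounded isotropy, for instance an infinite product of corepresentables at a strictly decreasing chain of open normal subgroups, or the fixed-point Mackey functor $X \mapsto \Map_G(X,A)$ of a discrete $G$-module $A$ whose action factors through no finite quotient; and one still owes the argument that such an object is not equivalent to $(\iota_N)_! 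M_N$ for any single $N$, which is where the real work of the final claim lies.
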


\subsection*{Acknowledgements}
We are grateful to Kaya Arro and Bastiaan Cnossen for useful conversations. NT was supported by the Engineering and Physical Sciences Research Council under grant number EP/Z534705/1.

\subsection*{Conventions and Notation}
\begin{itemize}
    \item We denote by $\sf{Cat}_\infty$ the $\infty$-category of (essentially) small $\infty$-categories.
    \item We denote by $\sf{Pr}^L$ the $\infty$-category of presentable $\infty$-categories and left adjoints. Presentability here implies that being a left adjoint is equivalent to preserving all colimits. 
    \item We denote by $\sf{Pr}^R$ the $\infty$-category of presentable $\infty$-categories and right adjoints. Presentability here implies that being a right adjoint is equivalent to preserving all limits. 
    \item We denote by $\widehat{\sf{Cat}}_\infty$ the $\infty$-category of (possibly) large $\infty$-categories.
\end{itemize}

\section{(Co)Limits of \texorpdfstring{$\infty$-categories}{}}
This section is dedicated to understanding how limits and colimits of $\infty$-categories may be computed. We will begin with a discussion on how to compute these in the $\infty$-category of small $\infty$-categories, the discussion of which carries over directly to the $\infty$-category of large $\infty$-categories. We will then discuss how (co)limits may be computed in other variants of the $\infty$-category of $\infty$-categories as a relative statement with respect to  the $\infty$-category $\sf{Cat}_\infty$ of essentially small $\infty$-categories or the $\infty$-category $\widehat{\sf{Cat}}_\infty$ of potentially large $\infty$-categories.

\subsection{Limits of \texorpdfstring{$\infty$-categories}{}}\label{subsection: limits}
By~\cite[\href{https://kerodon.net/tag/02TL}{Corollary 02TL}]{kerodon} the $\infty$-category $\sf{Cat}_\infty$ of essentially small $\infty$-categories admits all small limits. We will describe how these limits are constructed, but for a more comprehensive account we direct the reader to~\cite[\href{https://kerodon.net/tag/02T0}{Subsection 02T0}]{kerodon} or \cite[\S3.3.3]{HTT}.

Let $I$ denote a small $\infty$-category and let $\es{C}_{(-)}: I \to \sf{Cat}_\infty$. By straightening-unstraightening\footnote{Also sometimes called the ``(Lurie-)Grothendieck construction'', or ``covariant transport''.} the data of the functor $\es{C}_{(-)}$ is equivalent to a data of a cocartesian fibration
\[
U: \textstyle\int_{I} \es{C}_{(-)} \longrightarrow I,
\]
where $\int_I \es{C}_{(-)}$ is the $\infty$-category of elements of $\es{C}_{(-)}$. The objects of $\int_{I} \es{C}_{(-)}$ are given by pairs $(i,X)$ where $i \in I$ and $X \in \es{C}_i$, see~\cite[\href{https://kerodon.net/tag/026V}{Example 026V}]{kerodon}, while the morphisms in $\int_{I} \es{C}_{(-)}$ from $(i,X)$ to $(j,Y)$ are given by pairs $(f,u)$ where $f : i \to j$ is a morphism in $I$ and $u: \es{C}_{(f)}(X) \to Y$ is a morphism in $\es{C}_{j}$, see~\cite[\href{https://kerodon.net/tag/026W}{Example 026W}]{kerodon}. A comprehensive account of the $\infty$-category of elements may be found in~\cite[\href{https://kerodon.net/tag/026H}{Subsection 026H}]{kerodon}. By~\cite[\href{https://kerodon.net/tag/02TK}{Corollary 02TK}]{kerodon} the limit of the functor $\es{C}_{(-)}: I \to \sf{Cat}_\infty$ is given by
\[
\lim_{i \in I} \es{C}_i \simeq \Fun_{/I}^{\rm{cocart}}(I, \textstyle \int_{I} \es{C}_{(-)} ) =: \Gamma(I, \int_{I} \es{C}_{(-)} ),
\]
the $\infty$-category of cocartesian sections of the cocartesian fibration $U :\int_{I} \es{C}_{(-)}  \to I$, i.e., sections of the fibration $U$ which send each morphism in $I$ to a $U$-cocartesian\footnote{A morphism $(f,u) : (i,X) \to (j,Y)$ is \emph{$U$-cocartesian} if and only if $u: \es{C}_{f}(X) \to Y$ is an equivalence in $\es{C}_{j}$, see~\cite[\href{https://kerodon.net/tag/026X}{Remark 026X}]{kerodon}.} morphism in $\int_{I} \es{C}_{(-)}$.

\subsection{Colimits of small \texorpdfstring{$\infty$-categories}{}}\label{subsection: colimits}
We now turn out attention to the case of colimit diagrams of $\infty$-categories. Our case of interest is exclusively in filtered colimits, but we will, for the sake of exposition, briefly discuss the general situation. By~\cite[\href{https://kerodon.net/tag/02V1}{Corollary 02V1}]{kerodon} the $\infty$-category $\sf{Cat}_\infty$ of (essentially) small $\infty$-categories admits all small colimits, with a more comprehensive account given in~\cite[\href{https://kerodon.net/tag/02UN}{Subsection 02UN}]{kerodon} or~\cite[\S3.3.4]{HTT}.

Given a functor $\es{C}_{(-)}: I \to \sf{Cat}_\infty$, we again represent $\es{C}_{(-)}$ by the associated cocartesian fibration $U : \int_I \es{C}_{(-)} \to I$. By~\cite[\href{https://kerodon.net/tag/02V0}{Corollary 02V0}]{kerodon} the colimit of the functor $\es{C}_{(-)}$ is given by
\[
\underset{i \in I}{\colim}~\es{C}_i \simeq \textstyle\int_I \es{C}_{(-)} [W^{-1}],
\]
the localization of the $\infty$-category $\int_I \es{C}_{(-)}$ of elements of $\es{C}_{(-)}$ at the set $W$ of $U$-cocartesian morphisms.

We now restrict to the case where $I$ is a filtered $\infty$-category. In fact, for our purposes it will suffice to consider the case when $I$ is a filtered $1$-category. For a comprehensive account of the theory of filtered colimits of $\infty$-categories we direct the reader to the account by Lurie~\cite[\href{https://kerodon.net/tag/062Y}{Subsection 062Y}]{kerodon} and the account by Rozenblyum~\cite{Rozenblyum}.

Let $\es{C}_{(-)}: I \to \sf{Cat}_\infty$ be a functor with $I$ a filtered $\infty$-category in the sense of~\cite[\href{https://kerodon.net/tag/02P9}{Definition 02P9}]{kerodon}. Objects of $\colim_{I} \es{C}_{(-)}$ are given as images of the objects of $\es{C}_i$ under the canonical functors $\es{C}_i \to \colim_{I} \es{C}_{(-)}$, i.e., 
\[
\sf{Ob}\left(\underset{i \in I}{\colim}~\es{C}_i\right) = \coprod_{i \in I} \sf{Ob}(\es{C}_i) / \sim
\]
where $X_i \sim X_j$ if and only if there exists a zigzag $i \xrightarrow{f} k \xleftarrow{g} j$ of maps in $I$ such that there is an equivalence $\es{C}_{f}(X_i) \simeq \es{C}_{g}(X_j)$ in $\es{C}_k$. Seeing this characterisation of the objects of $\colim_{i \in I}\es{C}_i$ is essentially an exercise on unravelling the definition of a colimit as a localization of the $\infty$-category of elements. In particular, an object $X$ of $\underset{i \in I}{\colim}~\es{C}_i$ occurs in some $\es{C}_i$. We denote the corresponding object by $X_i$ so that $X_i$ is mapped to $X$ under the canonical map $\es{C}_i \to \underset{i \in I}{\colim}~\es{C}_i$. Given two objects $X$ and $Y$ of the colimit $\underset{i \in I}{\colim}~\es{C}_i$, we can find an $i, j \in I$ such that $X$ is represented by $X_{i} \in \es{C}_{i}$ and $Y$ is represented by $Y_{j}$ in $\es{C}_{j}$. 
Since $I$ is filtered, for $k \in I$ with $k \geq i, j$, the object $X$ may be equivalently represented by $\es{C}_{f}(X_{i}) = X_k \in \es{C}_k$ for $f \colon i \to k$ and, the object $Y$ may be equivalently represented by $\es{C}_{g}(Y_{j}) = Y_k \in \es{C}_k$, for  $g \colon j \to k$ . It follows that the mapping space in $\underset{i \in I}{\colim}~\es{C}_i$ is given by
\[
\Map_{\underset{i \in I}{\colim}~\es{C}_i}(X,Y) = \colim_{k \in I^{(i,j)/}} \Map_{\es{C}_k}(X_k, Y_k).
\]
where $I^{(i, j)/}$ is the category of diagrams $i \rightarrow k \leftarrow j$ in $I$. Since the set of such $k$ is cofinal in $I$, one often writes
\[
\Map_{\underset{i \in I}{\colim}~\es{C}_i}(X,Y) = \underset{i \in I}{\colim}~ \Map_{\es{C}_i}(X_i, Y_i),
\]
see e.g.,see \cite[Lemma 0.2.1 and \S0.4]{Rozenblyum}.

\subsection{(Co)limits of potentially large \texorpdfstring{$\infty$-categories}{}}
Denote by $\widehat{\sf{Cat}_\infty}$ the $\infty$-category of potentially large $\infty$-categories. By passing to a larger universe (which we have kept implicit throughout) the above discussion extends to provide methods for computing (co)limits in $\widehat{\sf{Cat}_\infty}$, i.e., colimits are a localization of the Grothendieck construction and limits are given by cartesian sections of the fibration associated to the Grothendieck construction.

\subsection{(Co)limits of presentable \texorpdfstring{$\infty$-categories}{}}
Denote by $\sf{Pr}^L$ the $\infty$-category of presentable $\infty$-categories and left adjoints, and denote by $\sf{Pr}^R$ the $\infty$-category of presentable $\infty$-categories and right adjoints, see e.g.,~\cite[Definition 5.5.3.1]{HTT}. By \cite[Proposition 5.5.3.13 and Theorem 5.5.3.18]{HTT} $\sf{Pr}^L$ and $\sf{Pr}^R$ have all small limits and the inclusion functors\footnote{The \emph{inclusion} functors ``forget'' that the $\infty$-category is presentable and that the functor is a left (resp. right) adjoint.}
\begin{align*}
    \sf{Pr}^L &\hookrightarrow \widehat{\sf{Cat}_\infty} \\
    \sf{Pr}^R &\hookrightarrow \widehat{\sf{Cat}_\infty}
\end{align*}
preserves limits. The $\infty$-category $\sf{Pr}^L$ of presentable $\infty$-categories and left adjoints is dual to the $\infty$-category $\sf{Pr}^R$ of presentable $\infty$-categories and right adjoints, i.e., there is an equivalence  $(\sf{Pr}^L)^\op \simeq \sf{Pr}^R$ obtained by passing to adjoints, see e.g.,~\cite[Corollary 5.5.3.4]{HTT}. It follows that a cocone in $\sf{Pr}^L$ is a colimit cocone if and only if the cone given by taking right adjoints is a limit cone in $\sf{Pr}^R$. Said differently, a colimit in $\sf{Pr}^L$ is computed by passing to adjoints and computing the limit in $\sf{Pr}^R$, which in turn may be computed in $\widehat{\sf{Cat}_\infty}$.

In particular for $\es{C}_{(-)}: I \to \sf{Pr}^L$, given a collection of compatible maps $\{f_i: \es{C}_i \to \es{D}\}_{i \in I}$ in $\sf{Pr}^L$, there is an induced map
\[
\underset{i \in I}{\colim} \es{C}_i \longrightarrow  \es{D}.
\]
Passing to the right adjoints of $\{f_i\}_{i \in I}$ gives an induced map 
\[
\es{D} \longrightarrow  \underset{i \in I}{\lim} \es{C}_i
\]
in $\sf{Pr}^R$. From this discussion, the following lemma is immediate.

\begin{lem}\label{lem: cocone in PrL is cone in PrR}
Let $\es{C}_{(-)}: I \to \sf{Pr}^L$ be a diagram of presentable $\infty$-categories and let $\es{D}$ be a presentable $\infty$-category. If  $\{f_i: \es{C}_i \to \es{D}\}_{i \in I}$ is a compatible collection of maps in $\sf{Pr}^L$, then the induced map 
\[
\underset{i \in I}{\colim} \es{C}_i \longrightarrow \es{D}
\]
is an equivalence in $\sf{Pr}^L$ if and only if the map 
\[
\es{D} \longrightarrow  \underset{i \in I}{\lim} \es{C}_i,
\]
induced by the right adjoints of $\{f_i\}_{i \in I}$ is an equivalence in $\sf{Pr}^R$.
\end{lem}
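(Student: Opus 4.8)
The plan is to reduce the statement entirely to the duality $(\sf{Pr}^L)^{\op} \simeq \sf{Pr}^R$ recorded in the discussion above, which sends each presentable $\infty$-category to itself and each left adjoint to its right adjoint. First I would reformulate both conditions in terms of universal properties. By definition of the colimit, the induced comparison map $\colim_{i} \es{C}_i \to \es{D}$ is an equivalence in $\sf{Pr}^L$ if and only if the cocone $\{f_i\}_{i \in I}$ exhibits $\es{D}$ as the colimit of $\es{C}_{(-)}$, i.e.\ is a colimit cocone. Dually, the comparison map $\es{D} \to \lim_i \es{C}_i$ is an equivalence in $\sf{Pr}^R$ if and only if the cone $\{f_i^R\}_{i \in I}$ of right adjoints is a limit cone. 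Both comparison maps exist as morphisms in the respective categories because presentability guarantees that the colimit in $\sf{Pr}^L$ and the limit in $\sf{Pr}^R$ exist and are again presentable.

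The core step is to transport the cocone across the duality. Passing to opposite categories, a colimit cocone over $\es{C}_{(-)}$ in $\sf{Pr}^L$ is precisely a limit cone over the opposite diagram in $(\sf{Pr}^L)^{\op}$. Applying the equivalence $(\sf{Pr}^L)^{\op} \simeq \sf{Pr}^R$, which carries each $f_i$ to its right adjoint $f_i^R$, and invoking that any equivalence of $\infty$-categories both preserves and reflects the property of being a limit cone, I conclude that $\{f_i\}$ is a colimit cocone in $\sf{Pr}^L$ if and only if $\{f_i^R\}$ is a limit cone in $\sf{Pr}^R$. Combined with the two universal-property reformulations above, this yields the claimed equivalence.

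The only point demanding genuine care, and hence the main obstacle, is verifying that the comparison map $\es{D} \to \lim_i \es{C}_i$ induced by $\{f_i^R\}$ is genuinely the image under the duality of the comparison map $\colim_i \es{C}_i \to \es{D}$ induced by $\{f_i\}$, rather than merely that the cocone and cone correspond abstractly. This is settled by observing that the colimit object $\colim_i \es{C}_i$ in $\sf{Pr}^L$ is, by the very construction of colimits via adjoints recalled above, sent to the limit object $\lim_i \es{C}_i$ in $\sf{Pr}^R$ under the duality, so that the canonical structure maps into the colimit correspond to the canonical projections out of the limit. With this identification the two comparison maps are mutually adjoint, whence one is an equivalence exactly when the other is, completing the argument. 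As the authors note, given the duality this is essentially a formality; there is no substantive analytic content beyond carefully tracking the directions of the maps.
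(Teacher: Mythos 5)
Your proposal is correct and follows essentially the same route as the paper: the paper declares the lemma ``immediate'' from the preceding discussion of the duality $(\sf{Pr}^L)^{\op} \simeq \sf{Pr}^R$ and the induced comparison maps, and your argument is precisely a careful spelling-out of that reasoning, including the one genuinely delicate point (that the two comparison maps correspond under the duality, not merely the cocone and cone).
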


\subsection{(Co)limits and functor categories}

We begin with a brief detour through the landscape of cartesian closed $\infty$-categories. A comprehensive account of such may be found in \cite[\href{https://kerodon.net/tag/05UE}{Subsection 05UE}]{kerodon}, but as with the spirit of this document, we only summarise the salient features for us. Essentially an $\infty$-category $\es{C}$ is \emph{cartesian closed} if it admits finite products and for every object $C$ of $\es{C}$, the functor 
\[
C \times (-) : \es{C} \longrightarrow \es{C}
\]
is a left adjoint. We will denote the right adjoint by $\es{C}(X, -): \es{C} \to \es{C}$, and refer to it as the internal hom object, see e.g.,\cite[\href{https://kerodon.net/tag/05UJ}{Definition 05UJ}]{kerodon} for a precise definition. Many of the $\infty$-categories that we have seen so far in this document are cartesian closed, for instance the $\infty$-category $\sf{Cat}_\infty$ of small $\infty$-categories is cartesian closed~\cite[\href{https://kerodon.net/tag/05UR}{Example 05UR}]{kerodon} as is the $\infty$-category $\widehat{\sf{Cat}_\infty}$ by an analogous argument, just working the ``large'' universe, but we strongly advise the reader to not think too long about these set theoretical issues. In both cases, the internal hom object is given by the $\infty$-category of functors $\Fun(C, -)$.

The following fact is surely well-known to the experts but we were unable to locate a proof in the literature, although the proof proceeds precisely as one would proceed in $1$-category world.

\begin{lem}
Let $\es{C}$ be a bicomplete cartesian closed $\infty$-category. Given a diagram $\es{C}_{(-)}: I \to \es{C}$, there is an equivalence
\[
\es{C}(\underset{i \in I}{\colim}~\es{C}_i, D) \simeq \lim_{i \in I} \es{C}(\es{C}_i, D)
\]
for each $D \in \es{C}$.
\end{lem}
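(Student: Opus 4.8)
The plan is to prove the equivalence by exhibiting a chain of natural equivalences of mapping spaces and then invoking the Yoneda lemma, precisely mirroring the classical $1$-categorical argument. Since $\es{C}$ is cartesian closed, the internal hom $\es{C}(X,-)$ is the right adjoint to $X \times (-)$, and the defining adjunction supplies, for every object $Z$, a natural equivalence of mapping spaces
\[
\Map_{\es{C}}(Z, \es{C}(X, D)) \simeq \Map_{\es{C}}(Z \times X, D).
\]
My strategy is to compute $\Map_{\es{C}}(Z, \es{C}(\colim_i \es{C}_i, D))$ and $\Map_{\es{C}}(Z, \lim_i \es{C}(\es{C}_i, D))$ for an arbitrary test object $Z$, show they agree naturally in $Z$, and conclude by Yoneda that the two objects are equivalent.

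First I would unwind the left-hand side. Applying the internal-hom adjunction gives
\[
\Map_{\es{C}}\big(Z, \es{C}(\underset{i}{\colim}~\es{C}_i, D)\big) \simeq \Map_{\es{C}}\big(Z \times (\underset{i}{\colim}~\es{C}_i), D\big).
\]
The key input here is that $Z \times (-)$ is a left adjoint, hence preserves colimits, so $Z \times \colim_i \es{C}_i \simeq \colim_i (Z \times \es{C}_i)$. Then using that $\Map_{\es{C}}(-, D)$ sends colimits in the first variable to limits of spaces, followed by re-applying the internal-hom adjunction in the other direction, I obtain
\[
\Map_{\es{C}}\big(Z \times (\underset{i}{\colim}~\es{C}_i), D\big)
\simeq \lim_i \Map_{\es{C}}(Z \times \es{C}_i, D)
\simeq \lim_i \Map_{\es{C}}(Z, \es{C}(\es{C}_i, D)).
\]
Finally, since mapping spaces out of a fixed object commute with limits in the target, $\lim_i \Map_{\es{C}}(Z, \es{C}(\es{C}_i, D)) \simeq \Map_{\es{C}}(Z, \lim_i \es{C}(\es{C}_i, D))$, which is exactly the mapping space into the right-hand side.

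The main obstacle, and the point requiring the most care, is ensuring that each of these equivalences is genuinely \emph{natural} in the test object $Z$, so that the composite assembles into an equivalence of presheaves on $\es{C}$ to which the Yoneda lemma applies. In the $1$-categorical setting this naturality is automatic from the universal properties, but $\infty$-categorically one must be slightly more careful: the internal-hom adjunction should be invoked as an equivalence of mapping-space-valued functors (i.e.\ as the counit/unit data of the adjunction $X \times (-) \dashv \es{C}(X,-)$), and the preservation of colimits by $Z \times (-)$ should be phrased as a property of the left adjoint. Provided one treats the adjunction and the (co)continuity statements functorially from the outset, naturality propagates through the chain automatically. Having established a natural equivalence of the representable functors $\Map_{\es{C}}(-, \es{C}(\colim_i \es{C}_i, D))$ and $\Map_{\es{C}}(-, \lim_i \es{C}(\es{C}_i, D))$, the $\infty$-categorical Yoneda lemma yields the desired equivalence of objects, completing the proof.
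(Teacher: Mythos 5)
Your proposal is correct and follows essentially the same route as the paper's proof: the same Yoneda-style argument, with the identical chain of equivalences (internal-hom adjunction, colimit-preservation of $Z \times (-)$, the mapping-space/(co)limit relations, adjunction again, and limits in the target). Your added attention to naturality in the test object is a reasonable refinement of a point the paper leaves implicit, but the argument is the same.
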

\begin{proof}
We will employ a Yoneda style argument and show that the objects $\es{C}(\underset{i \in I}{\colim}~\es{C}_i, D)$ and $\underset{i \in I}{\lim} \es{C}(\es{C}_i, D)$ represent the same presheaf on $\es{C}$, i.e., that there is an equivalence
\[
\Map(C, \es{C}(\underset{i \in I}{\colim}~\es{C}_i, D)) \simeq \Map(C, \lim_{i \in I}  \es{C}(\es{C}_i, D))
\]
for every $C \in \es{C}$. Indeed, there is a series of equivalences
\begin{align*}
\Map(C, \es{C}(\underset{i \in I}{\colim}~\es{C}_i, D)) &\simeq \Map(C \times \underset{i \in I}{\colim}~\es{C}_i, D) \\ &\simeq  \Map(\underset{i \in I}{\colim}~(C \times \es{C}_i), D) \\ &\simeq \lim_{i \in I} \Map(C \times \es{C}_i, D) \\ &\simeq \lim_{i \in I} \Map(C, \es{C}(\es{C}_i, D)) \\ &\simeq \Map(C, \lim_{i \in I}  \es{C}(\es{C}_i, D))
\end{align*}
where the first, second and forth equivalences are essentially applications of the fact that $\es{C}$ is cartesian closed, while the third and fifth equivalences are the standard relationships between mapping spaces and (co)limits.
\end{proof}

\begin{lem}\label{lem: functor cats and colims}
Let $\es{C}_{(-)}: I \to \sf{Cat}_\infty$ be a digram of small $\infty$-categories. For any $\infty$-category $\es{D}$, there is an equivalence
\[
\Fun(\underset{i \in I}{\colim}~\es{C}_i, \es{D}) \simeq \lim_{i \in I} \Fun(\es{C}_i, \es{D})
\]
between the $\infty$-category of $\es{D}$-valued functors on $\underset{i \in I}{\colim}~\es{C}_i$ (computed in $\sf{Cat}_\infty$) and the limit (in $\widehat{\sf{Cat}_\infty}$) of the $\infty$-categories of $\es{D}$-valued functors on $\es{C}_i$.
\end{lem}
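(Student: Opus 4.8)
The plan is to deduce this from the previous lemma, applied to the bicomplete cartesian closed $\infty$-category $\es{C} = \widehat{\sf{Cat}_\infty}$. Recall from the discussion above that $\widehat{\sf{Cat}_\infty}$ is cartesian closed with internal hom given by the functor $\infty$-category, so that $\widehat{\sf{Cat}_\infty}(\es{A}, \es{B}) \simeq \Fun(\es{A}, \es{B})$ for all $\es{A}, \es{B} \in \widehat{\sf{Cat}_\infty}$, and that it is bicomplete. The diagram $\es{C}_{(-)} \colon I \to \sf{Cat}_\infty$ and the target $\es{D}$ both land in $\widehat{\sf{Cat}_\infty}$ via the inclusion $\sf{Cat}_\infty \hookrightarrow \widehat{\sf{Cat}_\infty}$; note that $\es{D}$ is allowed to be large, which is exactly why $\Fun(\es{C}_i, \es{D})$ is potentially large and the limit on the right-hand side is formed in $\widehat{\sf{Cat}_\infty}$.

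The one point requiring care is that the colimit $\underset{i \in I}{\colim}~\es{C}_i$ in the statement is taken in $\sf{Cat}_\infty$, whereas the previous lemma refers to the colimit computed in the ambient category $\es{C} = \widehat{\sf{Cat}_\infty}$. First I would therefore argue that these two colimits agree, i.e.\ that the inclusion $\sf{Cat}_\infty \hookrightarrow \widehat{\sf{Cat}_\infty}$ preserves small colimits. The cleanest way to see this is via the explicit localization model recalled in \cref{subsection: colimits}: the colimit is presented as the localization $\int_I \es{C}_{(-)}[W^{-1}]$ of the $\infty$-category of elements at the cocartesian morphisms, a construction that is insensitive to the ambient universe and produces a (small) $\infty$-category whose universal property $\Fun(\int_I \es{C}_{(-)}[W^{-1}], \es{E}) \simeq \Fun^{W}(\int_I \es{C}_{(-)}, \es{E})$ holds against every target $\es{E}$, whether small or large. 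Hence this object is simultaneously the colimit in $\sf{Cat}_\infty$ and in $\widehat{\sf{Cat}_\infty}$.

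With this identification in hand, the result follows immediately from the previous lemma with $\es{C} = \widehat{\sf{Cat}_\infty}$ and $D = \es{D}$:
\[
\begin{aligned}
\Fun\left(\underset{i \in I}{\colim}~\es{C}_i, \es{D}\right)
&\simeq \widehat{\sf{Cat}_\infty}\left(\underset{i \in I}{\colim}~\es{C}_i, \es{D}\right) \\
&\simeq \lim_{i \in I} \widehat{\sf{Cat}_\infty}(\es{C}_i, \es{D}) \\
&\simeq \lim_{i \in I} \Fun(\es{C}_i, \es{D}),
\end{aligned}
\]
where the first and third equivalences identify the internal hom of $\widehat{\sf{Cat}_\infty}$ with the functor $\infty$-category, and the middle equivalence is the content of the previous lemma.

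I expect the only genuine obstacle to be this universe-compatibility step; everything else is a formal consequence of cartesian closedness that was already isolated in the previous lemma. If one prefers to avoid invoking preservation of colimits under the inclusion, an alternative is to re-run the Yoneda-style computation of the previous lemma directly inside $\widehat{\sf{Cat}_\infty}$, but this merely relocates the same bookkeeping, since distributing a product past the colimit again forces one to compare the $\sf{Cat}_\infty$- and $\widehat{\sf{Cat}_\infty}$-colimits. I would therefore present the localization argument as the cleanest route.
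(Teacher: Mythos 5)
Your proposal is correct and follows essentially the same route as the paper's own proof: reduce to the previous cartesian-closedness lemma applied to $\widehat{\sf{Cat}_\infty}$, after checking that the inclusion $\sf{Cat}_\infty \hookrightarrow \widehat{\sf{Cat}_\infty}$ preserves colimits and that $\Fun(-,-)$ is the internal hom of $\widehat{\sf{Cat}_\infty}$. The only difference is that you spell out the colimit-preservation step via the localization model of the Grothendieck construction, where the paper simply asserts it.
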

\begin{proof}
The inclusion functor $\sf{Cat}_\infty \hookrightarrow \widehat{\sf{Cat}_\infty}$ preserves colimits hence the proof reduces to showing that $\Fun(-,-)$ is an internal hom object for $\widehat{\sf{Cat}_\infty}$. This latter holds by (a size argument adjustment of)~\cite[\href{https://kerodon.net/tag/05UR}{Example 05UR}]{kerodon}.
\end{proof}

In the situation of the above lemma, if $\es{D}$ is presentable, then the $\infty$-category $\Fun(\es{C}_i, \es{D})$ is presentable by~\cite[Proposition 5.5.3.6]{HTT}. Moreover, the transition maps $f_{i}^j : \es{C}_i \to \es{C}_j$ in the colimit induce an adjoint pair 
\[
\adjunction{(f_i^j)_!}{\Fun(\es{C}_i, \es{D})}{\Fun(\es{C}_j, \es{D})}{(f_i^j)^\ast},
\]
and the limit above is taken over the restriction maps. It follows that the diagram of functor categories is a diagram in $\sf{Pr}^R$, and so the limit may be computed there. 

\begin{cor}
Let $\es{C}_{(-)}: I \to \sf{Cat}_\infty$ be a digram of small $\infty$-categories. For any presentable $\infty$-category $\es{D}$, there is an equivalence
\[
\Fun(\underset{i \in I}{\colim}~\es{C}_i, \es{D}) \simeq \lim_{i \in I} \Fun(\es{C}_i, \es{D})
\]
between the $\infty$-category of $\es{D}$-valued functors on $\underset{i \in I}{\colim}~\es{C}_i$ (computed in $\sf{Cat}_\infty$) and the limit (in $\sf{Pr}^R$) of the $\infty$-categories of $\es{D}$-valued functors on $\es{C}_i$.
\end{cor}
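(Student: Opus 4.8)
The plan is to bootstrap from \cref{lem: functor cats and colims}, which already supplies the equivalence
\[
\Fun(\underset{i \in I}{\colim}~\es{C}_i, \es{D}) \simeq \lim_{i \in I} \Fun(\es{C}_i, \es{D})
\]
with the limit on the right computed in $\widehat{\sf{Cat}_\infty}$. Thus the only thing left to establish is that, under the presentability hypothesis on $\es{D}$, this same limit may equally well be computed in $\sf{Pr}^R$.

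First I would verify that the diagram $i \mapsto \Fun(\es{C}_i, \es{D})$ factors through the inclusion $\sf{Pr}^R \hookrightarrow \widehat{\sf{Cat}_\infty}$. On objects this is immediate: since $\es{D}$ is presentable, each $\Fun(\es{C}_i, \es{D})$ is presentable by \cite[Proposition 5.5.3.6]{HTT}. On morphisms, each transition functor $f_i^j : \es{C}_i \to \es{C}_j$ induces a restriction functor $(f_i^j)^\ast$ between the corresponding functor categories, and this restriction admits a left adjoint given by left Kan extension $(f_i^j)_!$; the existence of this Kan extension relies precisely on the cocompleteness of $\es{D}$, which holds because $\es{D}$ is presentable. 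Hence each restriction functor is a right adjoint, so the diagram of functor categories is a genuine diagram in $\sf{Pr}^R$ whose limit is formed over exactly the restriction maps appearing in \cref{lem: functor cats and colims}.

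Finally I would invoke the fact recorded earlier in this section, via \cite[Proposition 5.5.3.13 and Theorem 5.5.3.18]{HTT}, that $\sf{Pr}^R$ admits all small limits and that the inclusion $\sf{Pr}^R \hookrightarrow \widehat{\sf{Cat}_\infty}$ preserves them. Consequently the limit computed in $\sf{Pr}^R$ agrees with the one computed in $\widehat{\sf{Cat}_\infty}$, and the equivalence of \cref{lem: functor cats and colims} transports verbatim to the statement of the corollary. The single delicate point, and the step I expect to require the most care, is the factorisation through $\sf{Pr}^R$: it is the cocompleteness of $\es{D}$ that guarantees the left Kan extensions exist, and hence that the restriction functors are right adjoints, so that the relevant limit lands in $\sf{Pr}^R$ at all.
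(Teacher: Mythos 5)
Your proposal is correct and follows exactly the paper's own argument: the paper establishes this corollary via the paragraph preceding it, which likewise cites \cite[Proposition 5.5.3.6]{HTT} for presentability of the functor categories, notes that the restriction functors are right adjoints (with left adjoints the left Kan extensions), and concludes that the diagram lies in $\sf{Pr}^R$, where the limit agrees with the one in $\widehat{\sf{Cat}_\infty}$ by \cite[Proposition 5.5.3.13 and Theorem 5.5.3.18]{HTT}. Nothing is missing; your emphasis on cocompleteness of $\es{D}$ guaranteeing the Kan extensions is precisely the point the paper relies on.
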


Passing to adjoints, i.e., using that limits in $\sf{Pr}^R$ are colimits in $\sf{Pr}^L$, yields the following.

\begin{cor}
Let $\es{C}_{(-)}: I \to \sf{Cat}_\infty$ be a digram of small $\infty$-categories. For any presentable $\infty$-category $\es{D}$, there is an equivalence
\[
\Fun(\underset{i \in I}{\colim}~\es{C}_i, \es{D}) \simeq \underset{i \in I}{\colim}~ \Fun(\es{C}_i, \es{D})
\]
between the $\infty$-category of $\es{D}$-valued functors on $\underset{i \in I}{\colim}~\es{C}_i$ (computed in $\sf{Cat}_\infty$) and the colimit (in $\sf{Pr}^L$) of the $\infty$-categories of $\es{D}$-valued functors on $\es{C}_i$.
\end{cor}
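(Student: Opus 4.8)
The plan is to deduce this directly from the preceding corollary together with the duality between $\sf{Pr}^L$ and $\sf{Pr}^R$, so that nothing new needs to be computed. The preceding corollary already furnishes an equivalence $\Fun(\underset{i \in I}{\colim}~\es{C}_i, \es{D}) \simeq \lim_{i \in I} \Fun(\es{C}_i, \es{D})$ in which the limit is taken in $\sf{Pr}^R$ along the restriction functors $(f_i^j)^\ast$. The key observation, already recorded in the discussion preceding that corollary, is that each restriction functor $(f_i^j)^\ast$ is the right adjoint of the left Kan extension $(f_i^j)_!$. Hence the diagram $i \mapsto \Fun(\es{C}_i, \es{D})$ is a diagram in $\sf{Pr}^R$ whose associated diagram of left adjoints is the diagram $i \mapsto \Fun(\es{C}_i, \es{D})$ in $\sf{Pr}^L$ with transition maps $(f_i^j)_!$.

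First I would invoke the equivalence $(\sf{Pr}^L)^\op \simeq \sf{Pr}^R$ obtained by passing to adjoints. Under this equivalence the limit of the diagram in $\sf{Pr}^R$ corresponds to the colimit of the adjoint diagram in $\sf{Pr}^L$. Concretely, by \cref{lem: cocone in PrL is cone in PrR}, the canonical cocone exhibiting $\Fun(\underset{i \in I}{\colim}~\es{C}_i, \es{D})$ as the colimit in $\sf{Pr}^L$ is a colimit cocone precisely when the cone of right adjoints exhibits it as the limit in $\sf{Pr}^R$; the latter is exactly the content of the preceding corollary. Combining these, I would conclude the desired equivalence $\Fun(\underset{i \in I}{\colim}~\es{C}_i, \es{D}) \simeq \underset{i \in I}{\colim}~\Fun(\es{C}_i, \es{D})$, with the colimit computed in $\sf{Pr}^L$.

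The argument is entirely formal once the preceding corollary is in hand; the only step requiring care—and the one I expect to be the main obstacle—is the bookkeeping of which functors serve as the transition maps in each variant. One must verify that the left adjoints $(f_i^j)_!$ are genuinely the maps along which the $\sf{Pr}^L$ colimit is formed, i.e., that the comparison map produced by the universal property of the colimit agrees with the one obtained by dualizing the $\sf{Pr}^R$ limit cone under $(\sf{Pr}^L)^\op \simeq \sf{Pr}^R$. This compatibility is guaranteed by the functoriality of passing to adjoints, but it is the place where one must take care not to conflate the two dual descriptions of the same diagram.
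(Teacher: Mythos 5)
Your proposal is correct and matches the paper's own argument: the paper likewise deduces this corollary from the preceding one by ``passing to adjoints,'' i.e., using the equivalence $(\sf{Pr}^L)^\op \simeq \sf{Pr}^R$ (packaged in \cref{lem: cocone in PrL is cone in PrR}) to convert the $\sf{Pr}^R$ limit along the restriction functors into the $\sf{Pr}^L$ colimit along the left Kan extensions. Your extra care about which functors serve as transition maps is exactly the bookkeeping the paper leaves implicit.
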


\section{Categories of \texorpdfstring{$G$-sets}{}}

Let $G$ be a topological group. For our purposes it suffices to consider the case where $G$ is a profinite group. We will denote by $\sf{Set}_G$ the category of sets with an action of the group $G$. Note that there is no finiteness assumptions on the underlying set. 

For $G$ any topological group and $N$ a normal subgroup of $G$, the canonical projection $G \to G/N$ Induces an adjunction 
\[
\adjunction{\inf_{G/N}^G}{\sf{Set}_{G/N}}{\sf{Set}_G}{(-)^N}.
\]
The left adjoint is frequently referred to as \emph{inflation} and is defined so that $N$ acts trivially, hence the unit map
\[
X \longrightarrow ({\inf}_{G/N}^G(X))^N,
\]
is equivalent to the identity. The counit
\[
{\inf}_{G/N}^G(X^N) \longrightarrow X,
\]
corresponds, as a map of sets, to the inclusion $X^N \hookrightarrow X$ and hence is an injection.

Let $G$ be a profinite group. The set $\{N \leq_o G\}$ of open and normal subgroups is a poset with respect to subgroup inclusion and the group $G$ is equivalently described as the direct (cofiltered) limit of the functor 
\[
(\{N \leq_o G\}, \subseteq) \longrightarrow \sf{Grp}, \quad N \longmapsto G/N.
\]
For example, the $p$-adic integers $\bb{Z}_p$ is the direct limit of the diagram 
% https://q.uiver.app/#q=WzAsNCxbMywwLCJcXGJie1p9L3AiXSxbMiwwLCJcXGJie1p9L3BeMiJdLFsxLDAsIlxcYmJ7Wn0vcF4zIl0sWzAsMCwiXFxjZG90cyJdLFszLDJdLFsyLDFdLFsxLDBdXQ==
\[\begin{tikzcd}
	\cdots & {\bb{Z}/p^3} & {\bb{Z}/p^2} & {\bb{Z}/p}
	\arrow[from=1-1, to=1-2]
	\arrow[from=1-2, to=1-3]
	\arrow[from=1-3, to=1-4]
\end{tikzcd}\]
induced by the canonical projections. For $M \subseteq N$, the canonical projection $G/M \to G/N$ induces an adjoint pair
\[
\adjunction{\inf_{G/N}^{G/M}}{\Set_{G/N}}{\Set_{G/M}}{(-)^M}.
\]
Since the projections are compatible, i.e., the diagram 
% https://q.uiver.app/#q=WzAsMyxbMSwwLCJHL04iXSxbMCwxLCJHL00iXSxbMiwxLCJHL0siXSxbMCwxXSxbMCwyXSxbMSwyXV0=
\[\begin{tikzcd}
	& {G/N} \\
	{G/M} && {G/K}
	\arrow[from=1-2, to=2-1]
	\arrow[from=1-2, to=2-3]
	\arrow[from=2-1, to=2-3]
\end{tikzcd}\]
commutes, the above adjunctions are compatible with the poset structure, i.e., the diagrams
% https://q.uiver.app/#q=WzAsNixbMSwwLCJcXFNldF97Ry9LfSJdLFswLDEsIlxcU2V0X3tHL019Il0sWzIsMSwiXFxTZXRfe0cvTn0iXSxbNSwwLCJcXFNldF97Ry9LfSJdLFs2LDEsIlxcU2V0X3tHL059Il0sWzQsMSwiXFxTZXRfe0cvTX0iXSxbMCwyLCIoLSleTiJdLFsxLDIsIigtKV5OIiwyXSxbMCwxLCIoLSleTSIsMl0sWzMsNSwiXFxzZntpbmZ9X3tHL019XntHL0t9IiwyLHsic3R5bGUiOnsidGFpbCI6eyJuYW1lIjoiYXJyb3doZWFkIn0sImhlYWQiOnsibmFtZSI6Im5vbmUifX19XSxbMyw0LCJcXHNme2luZn1fe0cvTn1ee0cvS30iLDAseyJzdHlsZSI6eyJ0YWlsIjp7Im5hbWUiOiJhcnJvd2hlYWQifSwiaGVhZCI6eyJuYW1lIjoibm9uZSJ9fX1dLFs1LDQsIlxcc2Z7aW5mfV97Ry9OfV57Ry9NfSIsMix7InN0eWxlIjp7InRhaWwiOnsibmFtZSI6ImFycm93aGVhZCJ9LCJoZWFkIjp7Im5hbWUiOiJub25lIn19fV1d
\[\begin{tikzcd}
	& {\Set_{G/K}} &&&& {\Set_{G/K}} \\
	{\Set_{G/M}} && {\Set_{G/N}} && {\Set_{G/M}} && {\Set_{G/N}}
	\arrow["{(-)^M}"', from=1-2, to=2-1]
	\arrow["{(-)^N}", from=1-2, to=2-3]
	\arrow["{\sf{inf}_{G/M}^{G/K}}"', tail reversed, no head, from=1-6, to=2-5]
	\arrow["{\sf{inf}_{G/N}^{G/K}}", tail reversed, no head, from=1-6, to=2-7]
	\arrow["{(-)^N}"', from=2-1, to=2-3]
	\arrow["{\sf{inf}_{G/N}^{G/M}}"', tail reversed, no head, from=2-5, to=2-7]
\end{tikzcd}\]
commute for $K \subseteq M \subseteq N$ open and normal subgroups of $G$. As such, we may take a cofiltered limit along fixed points functors, and a filtered colimit along inflation functors. 

There are several subcategories of $\sf{Set}_G$ that will play a crucial role for us, and the above discussion descends to any of these subcategories. We will denote by $\sf{Fin}_G$ the category of finite $G$-sets. 

\begin{definition}
Let $G$ be a profinite group. A $G$-set $X$ is said to be \emph{discrete}\footnote{A more thorough discussion on discrete $G$-sets may be found in~\cite[Definition 3.1]{BarnesSugrueClassifying} and the ensuing discussion.} if the canonical map
\[
\underset{N \trianglelefteq_o G}{\colim}~X^N \longrightarrow X,
\]
is an isomorphism. Observe that a $G$-set $X$ is discrete if and only if the action of $G$ is continuous, when $X$ is equipped with the discrete topology. We denote the category of discrete $G$-sets by $\sf{Set}_G^\delta$ and the category of finite discrete $G$-sets by $\sf{Fin}_G^\delta$.
\end{definition}

Finite discrete $G$ sets may be described as a filtered colimit of the finite quotients.

\begin{thm}\label{thm: GSet as colim}
Let $G$ be a profinite group. The collection of inflation functors running over the set of open and normal subgroups of $G$ assembles into an equivalence of $\infty$-categories
\[
\underset{N \trianglelefteq_o G}{\colim}~\sf{Fin}_{G/N} \xrightarrow{\ \simeq \ } \sf{Fin}_G^\delta,
\]
between the colimit (in $\sf{Cat}_\infty$) of the $\infty$-categories of finite $G/N$-sets and the $\infty$-category of finite \emph{discrete} $G$-sets.
\end{thm}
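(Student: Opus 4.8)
The plan is to construct the comparison functor explicitly and verify that it is essentially surjective and fully faithful, using the description of filtered colimits in $\sf{Cat}_\infty$ recorded in \cref{subsection: colimits}. First I would note that the inflation functors $\sf{Fin}_{G/N} \to \sf{Fin}_G^\delta$ are compatible with the transition inflations: for $M \subseteq N$ the group projections compose as $G \to G/M \to G/N$, so $\inf_{G/M}^G \circ \inf_{G/N}^{G/M} = \inf_{G/N}^G$, and hence these functors assemble into a single functor $\Phi$ out of the colimit. Because each $\sf{Fin}_{G/N}$ is an ordinary $1$-category and the indexing poset is filtered, the mapping-space formula of \cref{subsection: colimits} expresses every mapping space in the colimit as a filtered colimit of \emph{discrete} sets, hence again a set; thus the colimit is a $1$-category and it suffices to check that $\Phi$ is essentially surjective and induces a bijection on each mapping set.

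For essential surjectivity, let $X$ be a finite discrete $G$-set. By definition the action of $G$ on $X$ is continuous for the discrete topology, so each orbit map $G \to X$ is continuous and every stabiliser is open. The kernel $N$ of the action homomorphism $G \to \mathsf{Sym}(X)$ is a finite intersection of open stabilisers, hence open, and it is normal as the kernel of a homomorphism. Thus $N \trianglelefteq_o G$ acts trivially on $X$, so $X$ is the inflation of the finite $G/N$-set obtained by letting $G/N$ act in the evident way; in particular $X$ lies in the essential image of $\Phi$.

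The heart of the argument is full faithfulness, which rests on the observation that inflation functors are fully faithful. Given finite $G/N$-sets and viewing their inflations to $G/M$ (for $M \subseteq N$), the subgroup $N/M$ acts trivially on both source and target, so a $G/M$-equivariant map between two such inflations is the same datum as a $(G/M)/(N/M) \cong G/N$-equivariant map; that is, $\inf_{G/N}^{G/M}$ is fully faithful, and likewise $\inf_{G/N}^G \colon \sf{Fin}_{G/N} \to \sf{Fin}_G^\delta$ is fully faithful by the same reasoning applied to the kernel $N$. Consequently, for objects represented by $X_N, Y_N \in \sf{Fin}_{G/N}$ (which, by filteredness, can be chosen over a common $N$), every transition map in the filtered system $\colim_{M \subseteq N} \Map_{\sf{Fin}_{G/M}}(\inf_{G/N}^{G/M} X_N, \inf_{G/N}^{G/M} Y_N)$ is a bijection, so the colimit is simply $\Map_{\sf{Fin}_{G/N}}(X_N, Y_N)$; the full faithfulness of $\inf_{G/N}^G$ then identifies this with $\Map_{\sf{Fin}_G^\delta}(\Phi X_N, \Phi Y_N)$, exactly as required.

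I expect the main obstacle to be the bookkeeping in full faithfulness: one must keep straight the two uses of ``inflation is fully faithful'' (once for the transition maps internal to the colimit, once for the comparison with $\sf{Fin}_G^\delta$) and confirm that the bijection produced is precisely the map induced by $\Phi$, rather than merely an abstract isomorphism of mapping sets. The essential surjectivity and the reduction to a $1$-categorical statement are, by contrast, routine once the continuity and open-stabiliser dictionary for discrete $G$-sets is in hand.
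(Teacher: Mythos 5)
Your proposal is correct and follows essentially the same route as the paper's proof: construct the comparison functor from the compatible inflations, prove essential surjectivity by finding an open normal subgroup (the kernel of the action) acting trivially on a given finite discrete $G$-set, and prove full faithfulness by observing that a map between $N$-fixed sets already lives at level $N$ of the colimit. Your write-up is somewhat more detailed --- in particular the explicit use of the filtered mapping-space formula from \cref{subsection: colimits} and the observation that all transition maps are bijections because inflation is fully faithful --- but the underlying argument is the same as the paper's.
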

\begin{proof}
Let $N$ be an open and normal subgroup of $G$. For $X$ a finite $G/N$-set, the inflation of $X$ to a $G$-set is still finite and discrete since each point of $\inf_{G/N}^G$ is fixed by $N$. As such, the collection of inflation functors assemble into a functor
\[
\inf: \underset{N \trianglelefteq_o G}{\colim}~\sf{Fin}_{G/N} \longrightarrow \sf{Fin}_G^\delta,
\]
which we will show is an equivalence of $\infty$-categories. To see that the induced functor is essentially surjective, observe that any finite discrete $G$-set $X$ is fixed by some open subgroup $H$, which must contain an open normal subgroup $N$. 

It is left to show that the functor is fully faithful. For a map $X \to Y$ of finite discrete $G$-sets, there exists an open and normal subgroup $N$ such that $X$ and $Y$ are both $N$-fixed. It follows that the map $X \to Y$ appears in term $N$ of the colimit. This construction provides an inverse to map induced by $\inf: \underset{N \trianglelefteq_o G}{\colim}~\sf{Fin}_{G/N} \longrightarrow \sf{Fin}_G^\delta$ on mapping sets. 
\end{proof}

We can extend the above result on finite (discrete) $G$-sets to all $G$-sets by Ind-completing, i.e., by formally adjoining filtered colimits. For an $\infty$-category $\es{C}$, we say that a functor $H: \es{C} \to \hat{\es{C}}$ exhibits $\hat{\es{C}}$ as the \emph{Ind-completion} of $\es{C}$ if $\hat{\es{C}}$ admits filtered colimits and for any $\infty$-category $\es{D}$ which admits filtered colimits, the induced functor
\[
\Fun^{\omega}(\hat{\es{C}}, \es{D}) \longrightarrow \Fun(\es{C}, \es{D}),
\]
is an equivalence between the $\infty$-category of finite colimit preserving functors from $\hat{\es{C}}$ to $\es{D}$ and the $\infty$-category of functors from $\es{C}$ to $\es{D}$, see e.g.,~\cite[\href{https://kerodon.net/tag/063K}{Definition 063K}]{kerodon}. The Ind-completion assembles into a functor
\[
\sf{Ind} : \sf{Cat}_\infty \longrightarrow \sf{Acc},
\]
from the $\infty$-category of $\infty$-categories to the $\infty$-category of accessible $\infty$-categories and accessible functors between them. If we assume that our input category has finite colimits, Ind-completion defines a functor
\[
\sf{Ind} : \sf{Cat}_\infty^{\rm rex} \longrightarrow \sf{Pr}^L,
\]
from the $\infty$-category of finitely cocomplete $\infty$-categories and right exact\footnote{A functor is \emph{right exact} if it preserves finite colimits.} functors to the $\infty$-category of presentable $\infty$-categories and left adjoints. By~\cite[Proposition 5.4.2.19 and Proposition 5.5.7.10]{HTT} this functor exhibits the latter category as a localization of the former, and hence Ind-completion (in this form) preserves colimits, see also~\cite[Proposition 5.3.5.10]{HTT}. Colimits in $\sf{Pr}^L$ are formed by passing to the diagram of right adjoints through the equivalence $(\sf{Pr}^{L})^\op \simeq \sf{Pr}^R$ and computing the limit. Since the forgetful functor $\sf{Pr}^R \to \widehat{\sf{Cat}}_\infty$ preserves limits~\cite[Theorem 5.5.3.18]{HTT}, we have the following description of the Ind-completion of a colimit of $\infty$-categories.

\begin{lem}\label{lem: Ind of colim}
Let $\es{C}_{(-)}: I \to \sf{Cat}_\infty^{\rm rex}$ be a diagram of finitely cocomplete $\infty$-categories. There is an equivalence of $\infty$-categories
\[
\sf{Ind}(\underset{i \in I}{\colim}~ \es{C}_i) \simeq \lim_{i \in I} \sf{Ind}(\es{C}_i),
\]
between the Ind-completion of the colimit (in $\sf{Cat}_\infty$) of $\es{C}_{(-)}$ and the limit (in $\sf{Pr}^R$) of the Ind-completion of $\es{C}_{(-)}$.
\end{lem}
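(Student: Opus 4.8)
The plan is to deduce the equivalence directly by concatenating the two structural facts assembled in the preceding discussion: that $\sf{Ind}$ preserves colimits, and that colimits in $\sf{Pr}^L$ are computed as limits in $\sf{Pr}^R$. The formal skeleton of the argument is only two steps long; the substance lies in making sure each colimit is formed in the right $\infty$-category.

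First I would apply the functor $\sf{Ind} : \sf{Cat}_\infty^{\rm rex} \to \sf{Pr}^L$ to the diagram $\es{C}_{(-)}$. Since this functor exhibits $\sf{Pr}^L$ as a localization of $\sf{Cat}_\infty^{\rm rex}$, it is in particular a left adjoint and so preserves colimits, giving an equivalence
\[
\sf{Ind}\left(\underset{i \in I}{\colim}~\es{C}_i\right) \simeq \underset{i \in I}{\colim}~\sf{Ind}(\es{C}_i),
\]
where the colimit on the right is now formed in $\sf{Pr}^L$. Next I would convert this $\sf{Pr}^L$-colimit into a limit: by the duality $(\sf{Pr}^L)^\op \simeq \sf{Pr}^R$ obtained by passing to right adjoints, recorded in \cref{lem: cocone in PrL is cone in PrR} and its surrounding discussion, the colimit of $\sf{Ind} \circ \es{C}_{(-)}$ in $\sf{Pr}^L$ is identified with the limit of the associated diagram of right adjoints in $\sf{Pr}^R$, so that
\[
\underset{i \in I}{\colim}~\sf{Ind}(\es{C}_i) \simeq \lim_{i \in I} \sf{Ind}(\es{C}_i).
\]
Composing the two displayed equivalences yields the claim.

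The one genuine obstacle, and the step I would spend the most care on, is the bookkeeping of \emph{which} $\infty$-category each colimit is formed in. The statement forms $\underset{i \in I}{\colim}~\es{C}_i$ in $\sf{Cat}_\infty$, whereas the colimit-preservation of $\sf{Ind}$ refers a priori to colimits formed in $\sf{Cat}_\infty^{\rm rex}$; moreover $\sf{Ind}$ can only be applied to the left-hand side once we know that colimit is itself finitely cocomplete. I would therefore need to verify that the colimit of $\es{C}_{(-)}$ computed in $\sf{Cat}_\infty$ already lies in $\sf{Cat}_\infty^{\rm rex}$ and agrees with the colimit computed there, so that the first displayed equivalence is legitimate. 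In our case of interest $I$ is a filtered $1$-category, and here the discrepancy evaporates: every finite diagram in a filtered colimit factors through a single term $\es{C}_i$, so its colimit may be taken there and is preserved by the right exact transition functors. Hence the $\sf{Cat}_\infty$-colimit of finitely cocomplete $\infty$-categories along right exact functors is again finitely cocomplete, the canonical functors $\es{C}_i \to \underset{i \in I}{\colim}~\es{C}_i$ are right exact, and the forgetful functor $\sf{Cat}_\infty^{\rm rex} \to \sf{Cat}_\infty$ preserves this colimit. Pinning down (or restricting to) this compatibility is really the only work beyond formally stringing together the two equivalences above.
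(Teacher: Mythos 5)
Your proof is correct and follows essentially the same route as the paper's: apply $\sf{Ind} : \sf{Cat}_\infty^{\rm rex} \to \sf{Pr}^L$, which preserves colimits because it exhibits a localization, then identify the resulting $\sf{Pr}^L$-colimit with a $\sf{Pr}^R$-limit via the duality $(\sf{Pr}^L)^{\op} \simeq \sf{Pr}^R$. The one step you treat more carefully than the paper --- checking that the $\sf{Cat}_\infty$-colimit of finitely cocomplete $\infty$-categories along right exact functors agrees with the colimit in $\sf{Cat}_\infty^{\rm rex}$ --- is simply asserted there (``colimits in $\sf{Cat}_\infty^{\rm rex}$ are computed in $\sf{Cat}_\infty$''), and your observation that this really needs filteredness of $I$ (the forgetful functor $\sf{Cat}_\infty^{\rm rex} \to \sf{Cat}_\infty$ is a right adjoint and preserves filtered, but not general, colimits; see \cite[\S 5.5.7]{HTT}) is a genuine sharpening, sufficient for every use of the lemma in the paper.
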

\begin{proof}
The functor 
\[
\sf{Ind} : \sf{Cat}_\infty^{\rm rex} \longrightarrow \sf{Pr}^L,
\]
preserves colimits since it exhibits the latter category as a localization of the former. The result then follows by how colimits are computed in both categories: colimit in $ \sf{Cat}_\infty^{\rm rex}$ are computed in $\sf{Cat}_\infty$ while colimits in $\sf{Pr}^L$ are computed as limits in $\sf{Pr}^R$. 
\end{proof}

\begin{ex}\label{ex: Ind of fin}
Let $G$ be a finite group. The Ind-completion of the $\infty$-category $\Fin_G$ of finite $G$-sets is the $\infty$-category $\Set_G$ of all $G$-sets.
\end{ex}
\begin{proof}
By~\cite[\href{https://kerodon.net/tag/065F}{Remark 065F}]{kerodon}, Ind-completion commutes with taking the nerve, so it suffices to show that the $1$-category of $G$-sets is the Ind-completion (as $1$-categories) of the $1$-category $\Fin_G$. This completely classical statement follows from the observation that the inclusion $\sf{Fin}_G \hookrightarrow \sf{Set}_G$ is fully-faithful, finite $G$-sets are the compact objects in the category of $G$-sets and that every $G$-set may be written as a filtered colimit of finite $G$-sets.
\end{proof}

\begin{ex}\label{ex: Ind of profinite}
Let $G$ be a profinite group. The Ind-completion of the $\infty$-category $\Fin_G^\delta$ of finite discrete $G$-sets is the $\infty$-category $\Set_G^\delta$ of discrete $G$-sets.
\end{ex}
\begin{proof}
As above this reduces to the $1$-categorical statement which has an completely analogous proof using the colimit description of discrete $G$-sets.
\end{proof}

We obtain the following corollary of~\cref{thm: GSet as colim}.

\begin{cor}\label{cor: GSet as limit}
Let $G$ be a profinite group. The collection of fixed points functors assembles into an equivalence of $\infty$-categories 
\[
\Set_G^\delta \xrightarrow{\ \simeq \ } \underset{N \trianglelefteq_o G}{\lim}~\Set_{G/N}
\]
between the $\infty$-category of discrete $G$-sets and the limit (in $\sf{Pr}^R$) of the $\infty$-categories of $G/N$-sets.
\end{cor}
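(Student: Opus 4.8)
The plan is to deduce this corollary by applying Ind-completion to the equivalence of \cref{thm: GSet as colim} and then reading off the result through \cref{lem: Ind of colim}, together with the two computations of Ind-completions recorded in \cref{ex: Ind of fin} and \cref{ex: Ind of profinite}. No new geometric or combinatorial input is needed; the content is entirely in organizing the formal machinery already set up.

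First I would verify that the diagram $N \mapsto \Fin_{G/N}$, with inflation functors as transition maps, genuinely lands in $\sf{Cat}_\infty^{\rm rex}$, so that \cref{lem: Ind of colim} applies. Each $\Fin_{G/N}$ is finitely cocomplete, since finite colimits of finite $G/N$-sets are computed on underlying sets and remain finite. Each inflation functor $\inf_{G/N}^{G/M}$ is a left adjoint (with right adjoint the $M$-fixed points functor), hence preserves all colimits and is in particular right exact. Thus $\Fin_{(-)} \colon (\{N \trianglelefteq_o G\}, \subseteq) \to \sf{Cat}_\infty^{\rm rex}$ is a well-defined diagram. I would also note that the equivalence of \cref{thm: GSet as colim}, being an equivalence of $\infty$-categories between finitely cocomplete targets, automatically preserves finite colimits and so is an equivalence in $\sf{Cat}_\infty^{\rm rex}$; this is what allows the rex-version of $\sf{Ind}$ to be applied to it.

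Next, applying $\sf{Ind}$ to the equivalence $\colim_N \Fin_{G/N} \simeq \Fin_G^\delta$, the right-hand side becomes $\Set_G^\delta$ by \cref{ex: Ind of profinite}, while \cref{lem: Ind of colim} rewrites the left-hand side as the limit (in $\sf{Pr}^R$) of the Ind-completions $\sf{Ind}(\Fin_{G/N})$. Since $N$ is open in a profinite group, each quotient $G/N$ is finite, so \cref{ex: Ind of fin} identifies $\sf{Ind}(\Fin_{G/N}) \simeq \Set_{G/N}$. Assembling these identifications yields the desired equivalence $\Set_G^\delta \simeq \lim_N \Set_{G/N}$ in $\sf{Pr}^R$.

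Finally I would identify the structure maps, which is where the main subtlety of the argument lies. The limit in $\sf{Pr}^R$ has as its structure functors the right adjoints of the transition maps appearing in the corresponding $\sf{Pr}^L$-colimit; under \cref{lem: Ind of colim} those transition maps are the Ind-completed inflation functors, whose right adjoints are exactly the fixed points functors $(-)^N \colon \Set_G^\delta \to \Set_{G/N}$. The hard part is therefore not any single computation but the bookkeeping of adjoints through the equivalence $(\sf{Pr}^L)^\op \simeq \sf{Pr}^R$, confirming that the projection of the limit equivalence onto the $N$-th factor is precisely the $N$-fixed points functor, as asserted in the statement. Everything else follows formally from the cited results.
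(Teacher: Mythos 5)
Your proposal is correct and follows essentially the same route as the paper: apply $\sf{Ind}$ to the equivalence of \cref{thm: GSet as colim}, convert the colimit to a $\sf{Pr}^R$-limit via \cref{lem: Ind of colim}, identify the terms with \cref{ex: Ind of fin} and \cref{ex: Ind of profinite}, and identify the cone functors as fixed points by passing to right adjoints (which is exactly the content of \cref{lem: cocone in PrL is cone in PrR}, cited for this purpose in the paper). Your explicit verification that the diagram of inflations lands in $\sf{Cat}_\infty^{\rm rex}$ spells out a hypothesis the paper only remarks on in passing.
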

\begin{proof}
By~\cref{thm: GSet as colim} there is an equivalence
\[
\underset{N \trianglelefteq_o G}{\colim}~\Fin_{G/N} \simeq  \Fin_G^\delta
\]
between the colimit (in $\sf{Cat}_\infty$) of the $\infty$-categories of finite $G/N$-sets and the $\infty$-category of finite discrete $G$-sets. Applying Ind-completion to this equivalence provides an equivalence 
\[
\sf{Ind} \left( \underset{N \trianglelefteq_o G}{\colim}~\Fin_{G/N} \right) \simeq  \sf{Ind}(\Fin_G^\delta)
\]
between the Ind-completion of the colimit (in $\sf{Cat}_\infty$) of the $\infty$-categories of finite $G/N$-sets and the Ind-completion of the $\infty$-category of finite discrete $G$-sets. By~\cref{lem: Ind of colim} Ind-completion sends colimits in $\sf{Cat}_\infty^{\rm rex}$ to limits in $\sf{Pr}^R$, hence there is an equivalence
\[
\lim_{N \leq_{o} G} \sf{Ind}(\Fin_{G/N})  \simeq \sf{Ind}(\Fin_G^\delta)
\]
since the $\infty$-categories of finite $G/N$-sets are finitely cocomplete and the inflation functors are right exact. The result follows by using~\cref{ex: Ind of fin} to identify the left-hand side of the equivalence with the left-hand side of the claimed equivalence in the lemma statement, and analogously, using~\cref{ex: Ind of profinite} to identify the right-hand side. The identification of the functors involved follows from~\cref{lem: cocone in PrL is cone in PrR}.
\end{proof}

\section{span categories}

We make the following definition following~\cite[\S3.7]{Barwick}. A precise definition in terms of recording the $n$-simplicies of the simplicial set underlying the $\infty$-category may be found in \cite[Definition 3.6]{Barwick}. 

\begin{definition}\label{def: span category}
Let $\es{C}$ be an $\infty$-category that admits pullbacks. The \emph{span $\infty$-category $\sf{Span}(\es{C})$ of $\es{C}$} is the $\infty$-category with objects the objects of $\es{C}$ and mapping space given by
\[
\Map_{\sf{Span}(\es{C})}(X,Y) = \left( \es{C}_{/\{X,Y\}}\right)^\simeq.
\]
Composition 
\[
\left( \es{C}_{/\{X,Y\}}\right)^\simeq \times \left( \es{C}_{/\{Y,Z\}}\right)^\simeq \longrightarrow \left( \es{C}_{/\{X,Z\}}\right)^\simeq 
\]
is defined, up to coherent homotopy, by pullback. 
\end{definition}

\begin{rem}
For $G$ a finite group, it is typical to denote by $\sf{Span}(G)$ the span category of finite $G$-sets. When $G$ is a profinite group, it is standard (but less so than for finite groups) to denote by $\sf{Span}(G)$ the span category of finite discrete $G$-sets, see e.g.,~\cite[Example B]{Barwick}. Since we have introduced many variants of the category of $G$-sets for a profinite group $G$, we will not stick to this naming convention and rather always write $\sf{Span}(\es{C})$ for the span $\infty$-category of $\es{C}$.
\end{rem}

This construction assembles into a functor
\[
\sf{Span}(-): \sf{Cat}^{\rm lex} \longrightarrow \sf{Cat}_\infty,
\]
from the $\infty$-category of $\infty$-categories with finite limits and left exact\footnote{A functor is \emph{left exact} if it preserves finite limits.} functors between them, to the $\infty$-category of $\infty$-categories. We now give an alternative construction of this functor following work of Haugseng, Hebestreit, Linskens and Nuiten~\cite{HHLN}. A complementary point of view (with alternative motivation) is provided in the book-in-progress of Cnossen~\cite{Cnossen}.

\begin{definition}
An \emph{adequate triple} $(\es{C}, \es{C}_{\sf{B}}, \es{C}_{\sf{F}})$ consists of an $\infty$-category $\es{C}$ together with two wide\footnote{A \emph{wide} sub-$\infty$-category of an $\infty$-category $\es{C}$ is a subcategory containing all the objects of $\es{C}$. } sub-$\infty$-categories $\es{C}_{\sf{B}}$ and $\es{C}_{\sf{F}}$, whose morphisms are called \emph{backward} and \emph{forward}, respectively, such that
\begin{enumerate}
\item for any backward morphism $f: y \to x$ and any forward morphism $g: x' \to x$, there exists a pullback
% https://q.uiver.app/#q=WzAsNCxbMCwwLCJ5JyJdLFswLDEsInkiXSxbMSwwLCJ4JyJdLFsxLDEsIngiXSxbMCwyLCJmJyJdLFsxLDMsImYiLDJdLFsyLDMsImciXSxbMCwxLCJnJyIsMl1d
\[\begin{tikzcd}
	{y'} & {x'} \\
	y & x
	\arrow["{f'}", from=1-1, to=1-2]
	\arrow["{g'}"', from=1-1, to=2-1]
	\arrow["g", from=1-2, to=2-2]
	\arrow["f"', from=2-1, to=2-2]
\end{tikzcd}\]
\item in any such pullback $f'$ is a backward morphism and $g'$ is a forward morphism.
\end{enumerate}
A functor $F: (\es{C}, \es{C}_{\sf{B}}, \es{C}_{\sf{F}}) \to (\es{D}, \es{D}_{\sf{B}}, \es{D}_{\sf{F}})$ of adequate triples is given by a functor $F: \es{C} \to \es{D}$ which preserves pullback squares in which the horizontal maps are backward and vertical maps forward. Denote by $\sf{AdTrip}$ the (non-full) sub-$\infty$-category of $\Fun(\Lambda_2[2], \sf{Cat}_\infty)$ spanned by the adequate triples and functors between them, see e.g.,~\cite[Definition 2.1]{HHLN}.
\end{definition}

The point of this definition is that adequate triples are the most general input for a span category. For our purposes we will only need the following fact: if $\es{C}$ admits pullbacks (e.g., is an object of $\sf{Cat}_\infty^{\rm lex}$), then $(\es{C}, \es{C}, \es{C})$ is an adequate triple. In particular, this gives a fully faithful embedding
\[
\sf{Cat}_\infty^{\rm lex} \hookrightarrow \sf{AdTrip}.
\]

\begin{thm}[{\cite[Theorem A]{HHLN}}]
There is an adjoint pair
\[
\adjunction{\sf{Tw}^r}{\sf{Cat}_\infty}{\sf{AdTrip}}{\sf{Span}(-)},
\]
in which $\sf{Tw}^r(\es{C})$ is the twisted arrow $\infty$-category\footnote{The twisted arrow $\infty$-category is the $\infty$-category of elements associated to the hom functor, see e.g.,~\cite[\href{https://kerodon.net/tag/046B}{Definition 046B}]{kerodon}. The superscript ``$r$'' indicates that we consider the version in which the projection maps are right fibrations, see e.g.,~\cite[\href{https://kerodon.net/tag/03JK}{Notation 03JK}]{kerodon}.} of $\es{C}$ viewed as an adequate triple via~\cite[Example 2.8]{HHLN}.
\end{thm}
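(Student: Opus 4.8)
The plan is to establish the stated adjunction $\sf{Tw}^r \dashv \sf{Span}(-)$ by exhibiting, for every small $\infty$-category $\es{C}$ and every adequate triple $\underline{\es{D}} = (\es{D}, \es{D}_{\sf{B}}, \es{D}_{\sf{F}})$, a natural equivalence of mapping spaces
\[
\Map_{\sf{AdTrip}}(\sf{Tw}^r(\es{C}), \underline{\es{D}}) \simeq \Map_{\sf{Cat}_\infty}(\es{C}, \sf{Span}(\underline{\es{D}})).
\]
The backbone of the equivalence is the following picture. An object of $\sf{Tw}^r(\es{C})$ is an arrow $f \colon c \to c'$ of $\es{C}$, and with the adequate-triple structure of \cite[Example 2.8]{HHLN} it sits inside a canonical span
\[
\sf{id}_c \longleftarrow f \longrightarrow \sf{id}_{c'}
\]
whose left leg (to the source) is backward and whose right leg (to the target) is forward. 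The identity arrows assemble into a copy of $\es{C}$ inside $\sf{Tw}^r(\es{C})$, and a morphism of adequate triples $G \colon \sf{Tw}^r(\es{C}) \to \underline{\es{D}}$ carries this canonical span to a genuine span in $\es{D}$ with backward and forward legs; reading off these spans is exactly the data of the functor $\es{C} \to \sf{Span}(\underline{\es{D}})$ adjoint to $G$.

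To make this precise I would first treat the case of the simplices $\es{C} = [n]$. By Barwick's simplicial description of the span construction \cite[Definition 3.6]{Barwick}, an $n$-simplex of $\sf{Span}(\underline{\es{D}})$ is a diagram $\sf{Tw}^r([n]) \to \es{D}$ that is marking-preserving and sends the distinguished squares (horizontal maps backward, vertical maps forward) to pullbacks. Equipping $\sf{Tw}^r([n])$ with its canonical adequate-triple structure, this is precisely the data of a morphism of adequate triples, so that
\[
\Map_{\sf{Cat}_\infty}([n], \sf{Span}(\underline{\es{D}})) \simeq \Map_{\sf{AdTrip}}(\sf{Tw}^r([n]), \underline{\es{D}})
\]
naturally in $[n] \in \Delta$. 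In other words, the desired adjunction already holds after restriction along $\Delta \hookrightarrow \sf{Cat}_\infty$.

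To upgrade from $\Delta$ to all of $\sf{Cat}_\infty$ I would invoke that every $\infty$-category is canonically the colimit of its simplices, $\es{C} \simeq \colim_{([n] \to \es{C}) \in \Delta_{/\es{C}}} [n]$. Since $\Map_{\sf{Cat}_\infty}(-, \sf{Span}(\underline{\es{D}}))$ sends this colimit to a limit, the representable identification yields
\[
\Map_{\sf{Cat}_\infty}(\es{C}, \sf{Span}(\underline{\es{D}})) \simeq \lim_{\Delta_{/\es{C}}} \Map_{\sf{AdTrip}}(\sf{Tw}^r([n]), \underline{\es{D}}).
\]
It then remains to pull the limit back inside the mapping space, i.e. to identify the right-hand side with $\Map_{\sf{AdTrip}}(\sf{Tw}^r(\es{C}), \underline{\es{D}})$. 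Because mapping spaces out of a colimit compute as limits, this reduces to the single geometric input
\[
\sf{Tw}^r(\es{C}) \simeq \underset{([n] \to \es{C}) \in \Delta_{/\es{C}}}{\colim}~\sf{Tw}^r([n]) \qquad \text{in } \sf{AdTrip}.
\]

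The hard part is exactly this colimit formula, and it is genuinely subtle because colimits in $\sf{AdTrip}$ are not detected on underlying $\infty$-categories: $\sf{AdTrip}$ is only a non-full subcategory of $\Fun(\Lambda_2[2], \sf{Cat}_\infty)$, and the backward and forward markings do not glue naively along $\Delta_{/\es{C}}$. In fact the bare functor $\es{C} \mapsto \sf{Tw}^r(\es{C})$ does not preserve arbitrary colimits of $\sf{Cat}_\infty$ --- its construction passes through $\es{C}^{\op} \times \es{C}$ --- so the equivalence above cannot be checked one underlying category at a time and really depends on the marked structure. Verifying it requires a direct analysis of how the twisted arrow construction interacts with the simplicial presentation of $\es{C}$, together with a careful check that the distinguished backward and forward edges assemble coherently; this is the technical core of \cite[Theorem A]{HHLN}. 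Once the formula is in hand, naturality in $\underline{\es{D}}$ is automatic and no triangle identities need be verified separately, since the adjunction has been produced directly as a natural equivalence of mapping spaces.
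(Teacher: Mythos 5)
The paper offers no proof of this statement at all: the theorem is imported wholesale from Haugseng--Hebestreit--Linskens--Nuiten, and the bracketed citation \emph{is} the paper's entire justification. So the only meaningful comparison is between your sketch and the proof in the cited source, and your sketch does reproduce that proof's architecture faithfully. The identification of $\Map_{\sf{Cat}_\infty}([n], \sf{Span}(\es{D}))$ with $\Map_{\sf{AdTrip}}(\sf{Tw}^r(\Delta[n]), \es{D})$ is essentially how $\sf{Span}$ is \emph{defined} in HHLN, via the simplicial space $Q_\bullet(\es{D}) = \Map_{\sf{AdTrip}}(\sf{Tw}^r(\Delta[\bullet]), \es{D})$ together with their Theorem 2.13 asserting that this is a complete Segal space; and the passage from $\Delta$ to all of $\sf{Cat}_\infty$ by density of the simplices plus objectwise corepresentability is exactly the right formal skeleton for producing the left adjoint.

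The problem is that your argument stops precisely where the theorem lives. The formula $\sf{Tw}^r(\es{C}) \simeq \colim_{\Delta_{/\es{C}}} \sf{Tw}^r(\Delta[n])$ in $\sf{AdTrip}$ --- equivalently, that $\sf{Tw}^r \colon \sf{Cat}_\infty \to \sf{AdTrip}$ is the left Kan extension of its restriction to $\Delta$ --- is not a formal consequence of anything you have set up: as you correctly observe, it fails at the level of underlying $\infty$-categories, and colimits in $\sf{AdTrip}$ are not computed in $\Fun(\Lambda_2[2], \sf{Cat}_\infty)$, so no levelwise check is available. Having isolated this step, you then cite it to ``the technical core of [HHLN, Theorem A]'', which is the very statement being proved; as a standalone argument the proposal is therefore circular at its only substantive step. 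What you have written is a correct and well-organised \emph{reduction} of the theorem to that lemma, not a proof of it. A minor further inaccuracy: the identity arrows do \emph{not} assemble into a copy of $\es{C}$ inside $\sf{Tw}^r(\es{C})$ --- a morphism between $\sf{id}_c$ and $\sf{id}_d$ in the twisted arrow category consists of a pair of maps $c \to d$ and $d \to c$ one of whose composites is homotopic to an identity, so the full subcategory on identities is the category of split monomorphisms with chosen retraction rather than $\es{C}$ --- but this occurs only in your motivational preamble and nothing downstream rests on it.
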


It follows from~\cite[Theorem 2.13]{HHLN} that the restriction of the right adjoint
(for which we keep the same notation)
\[
\sf{Span}(-): \sf{Cat}_\infty^{\rm lex} \hookrightarrow \sf{AdTrip} \longrightarrow 
\sf{Cat}_\infty,
\]
agrees with Barwick's construction from~\cref{def: span category}. The $\infty$-category $\sf{AdTrip}$ has all limits which are computed as
\[
\lim_{i \in I}(\es{C}_i, (\es{C}_{i})_{\sf{B}}, (\es{C}_i)_{\sf{F}}) \simeq (\lim_{i \in I}\es{C}_i, \lim_{i \in I}(\es{C}_i)_{\sf{B}}, \lim_{i \in I}(\es{C}_i)_{\sf{F}}),
\]
where the limits on the right-hand side are computed in $\sf{Cat}_\infty$, see e.g.,~\cite[Lemma 2.4]{HHLN}. Said differently, the $\infty$-category of adequate triples has limits which are computed in $\Fun(\Lambda_2[2], \sf{Cat}_\infty)$, hence levelwise in $\sf{Cat}_\infty$.

\begin{lem}\label{lem: span preserves limits}
The functor 
\[
\sf{Span}(-) : \sf{Cat}_\infty^{\rm lex} \longrightarrow \sf{Cat}_\infty,
\]
preserves all limits.
\end{lem}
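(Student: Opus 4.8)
The plan is to exploit the factorisation
\[
\sf{Span}(-): \sf{Cat}_\infty^{\rm lex} \hookrightarrow \sf{AdTrip} \longrightarrow \sf{Cat}_\infty
\]
recorded above and to show that each of the two functors preserves limits. The second functor is the right adjoint in the adjunction $\sf{Tw}^r \dashv \sf{Span}(-)$ supplied by \cite[Theorem A]{HHLN}, and right adjoints preserve all limits; hence $\sf{Span}(-): \sf{AdTrip} \to \sf{Cat}_\infty$ preserves limits with no further work. It therefore suffices to prove that the fully faithful embedding $\sf{Cat}_\infty^{\rm lex} \hookrightarrow \sf{AdTrip}$, $\es{C} \mapsto (\es{C}, \es{C}, \es{C})$, preserves limits.

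For the latter, fix a diagram $\es{C}_{(-)}: I \to \sf{Cat}_\infty^{\rm lex}$. First I would argue that the forgetful functor $\sf{Cat}_\infty^{\rm lex} \to \sf{Cat}_\infty$ creates limits, so that the limit of $\es{C}_{(-)}$ in $\sf{Cat}_\infty^{\rm lex}$ is carried by the same underlying $\infty$-category as the limit $\lim_{i} \es{C}_i$ formed in $\sf{Cat}_\infty$. Concretely, the limit $\lim_i \es{C}_i$ in $\sf{Cat}_\infty$ again admits finite limits, computed componentwise, and both the projections $\lim_i \es{C}_i \to \es{C}_j$ and any competing left exact cone are left exact, because finite limits in a limit of $\infty$-categories are detected and computed factorwise.

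Granting this, I would apply the embedding directly. Its value on $\lim_i \es{C}_i$ is the adequate triple $(\lim_i \es{C}_i, \lim_i \es{C}_i, \lim_i \es{C}_i)$, whereas the limit in $\sf{AdTrip}$ of the diagram $i \mapsto (\es{C}_i, \es{C}_i, \es{C}_i)$ is computed levelwise in $\sf{Cat}_\infty$ by \cite[Lemma 2.4]{HHLN}, and so likewise equals $(\lim_i \es{C}_i, \lim_i \es{C}_i, \lim_i \es{C}_i)$. These two adequate triples agree, which shows that the embedding preserves limits; composing with the limit-preserving functor $\sf{Span}(-): \sf{AdTrip} \to \sf{Cat}_\infty$ then yields the claim.

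I expect the only genuine obstacle to be the creation-of-limits claim for $\sf{Cat}_\infty^{\rm lex} \to \sf{Cat}_\infty$, that is, verifying that a limit of left exact diagrams again carries finite limits computed factorwise and that the structure maps remain left exact. Everything else is formal once the factorisation through $\sf{AdTrip}$ and the levelwise limit formula for adequate triples are in hand.
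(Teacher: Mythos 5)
Your proposal is correct and follows essentially the same route as the paper: factor $\sf{Span}(-)$ through $\sf{AdTrip}$, use that $\sf{Span}(-):\sf{AdTrip}\to\sf{Cat}_\infty$ preserves limits as a right adjoint (by the $\sf{Tw}^r \dashv \sf{Span}$ adjunction), and use the levelwise limit formula in $\sf{AdTrip}$ to see that the embedding $\sf{Cat}_\infty^{\rm lex}\hookrightarrow\sf{AdTrip}$ preserves limits. The only difference is that you make explicit the (true, and standard) claim that $\sf{Cat}_\infty^{\rm lex}\to\sf{Cat}_\infty$ creates limits, which the paper leaves implicit.
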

\begin{proof}
The functor in question is the composite 
\[
\sf{Cat}_\infty^{\rm lex} \hookrightarrow \sf{AdTrip} \xrightarrow{\sf{Span}(-)} \sf{Cat}_\infty.
\]
The left-hand functor preserves limits by the above discussion on how limits are computed in the $\infty$-category of adequate triples. The right-hand functor preserves limits as a right adjoint. It follows that the composite must also preserve limits.
\end{proof}

An immediate consequence of the above lemma is the following identification of the span category of discrete $G$-sets.

\begin{thm}\label{thm: span as limit}
Let $G$ be a profinite group. The collection of fixed point functors running over the set of open and normal subgroups of $G$ assembles into an equivalence of $\infty$-categories
\[
\sf{Span}(\Set_G^\delta) \xrightarrow{\ \simeq \ } \underset{N \trianglelefteq_o G}{\lim}~\sf{Span}(\Set_{G/N}),
\]
between the span $\infty$-category of discrete $G$-sets and the limit (in $\sf{Cat}_\infty$) of the span $\infty$-categories of $G/N$-sets.
\end{thm}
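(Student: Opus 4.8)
The plan is to transport the limit description of discrete $G$-sets obtained in \cref{cor: GSet as limit} through the span functor, exploiting that $\sf{Span}(-)$ preserves limits by \cref{lem: span preserves limits}. First I would record the input: by \cref{cor: GSet as limit} the fixed point functors exhibit an equivalence
\[
\Set_G^\delta \simeq \underset{N \trianglelefteq_o G}{\lim}~\Set_{G/N},
\]
where the limit is taken in $\sf{Pr}^R$. Since the inclusion $\sf{Pr}^R \hookrightarrow \widehat{\sf{Cat}}_\infty$ preserves limits, this is equally an equivalence of (large) $\infty$-categories in which the limit is computed in $\widehat{\sf{Cat}}_\infty$.

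Next I would verify that this is a limit to which the span functor applies. The morphisms in the diagram are the fixed point functors $(-)^M \colon \Set_{G/N} \to \Set_{G/M}$; being right adjoints, they preserve all limits and are in particular left exact, while each $\Set_{G/N}$ admits finite limits. Hence the diagram factors through the (large version of the) $\infty$-category $\widehat{\sf{Cat}}_\infty^{\rm lex}$ of $\infty$-categories with finite limits and left exact functors. Because the forgetful functor $\widehat{\sf{Cat}}_\infty^{\rm lex} \to \widehat{\sf{Cat}}_\infty$ creates limits — the limit of the underlying $\infty$-categories inherits finite limits computed levelwise through the cartesian-sections description, and the projections are left exact — the limit computed in $\widehat{\sf{Cat}}_\infty^{\rm lex}$ agrees with the one above, namely $\Set_G^\delta$.

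Finally I would apply the evident size-adjusted form of \cref{lem: span preserves limits}: since $\sf{Span}(-)$ preserves limits, applying it to the equivalence above yields
\[
\sf{Span}(\Set_G^\delta) \simeq \sf{Span}\Big(\underset{N \trianglelefteq_o G}{\lim}~\Set_{G/N}\Big) \simeq \underset{N \trianglelefteq_o G}{\lim}~\sf{Span}(\Set_{G/N}),
\]
with the structure maps obtained by applying $\sf{Span}(-)$ to the fixed point functors; this identifies the asserted equivalence together with the functors inducing it.

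The main obstacle is the bookkeeping across the different variants of the $\infty$-category of $\infty$-categories: one must check that the limit delivered by \cref{cor: GSet as limit} in $\sf{Pr}^R$ is literally the same object as the limit to which $\sf{Span}(-)$ is sensitive, i.e. the limit in $\widehat{\sf{Cat}}_\infty^{\rm lex}$. This hinges on the two compatibilities noted above — that both $\sf{Pr}^R \hookrightarrow \widehat{\sf{Cat}}_\infty$ and $\widehat{\sf{Cat}}_\infty^{\rm lex} \to \widehat{\sf{Cat}}_\infty$ preserve the relevant limits — together with the left exactness of the transition functors. Once these are in place the remainder is formal, and no genuine computation with spans is required.
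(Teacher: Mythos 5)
Your proof is correct and takes essentially the same route as the paper, whose proof is precisely to combine \cref{cor: GSet as limit} with \cref{lem: span preserves limits} (using \cref{lem: cocone in PrL is cone in PrR} to identify the functors as fixed point functors). The only difference is that you make explicit the bookkeeping the paper leaves implicit: that the fixed point functors are left exact, that the $\sf{Pr}^R$-limit of \cref{cor: GSet as limit} is also the limit in $\widehat{\sf{Cat}}_\infty^{\rm lex}$, and that a size-adjusted version of \cref{lem: span preserves limits} is what actually gets applied.
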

\begin{proof}
This follows by combining~\cref{cor: GSet as limit} and~\cref{lem: span preserves limits}, and by using~\cref{lem: cocone in PrL is cone in PrR} to identify the functors.
\end{proof}

\begin{rem}
The above limit only exists in the $\infty$-category $\sf{Cat}_\infty$. This is for two fundamental reasons: the $\infty$-category of spans is not presentable, and a left adjoint functor need not induce a left adjoint functor on the level of span categories. In fact, when an adjunction on underlying categories induces an adjunction on span categories is subtle, see~\cref{rem: adjoints}.
\end{rem}

The $\infty$-category of adequate triples also has filtered colimits and they are computed in the analogous way to limits, i.e., for a filtered $\infty$-category $I$ we have that
\[
\underset{i \in I}{\colim}~(\es{C}_i, (\es{C}_{i})_{\sf{B}}, (\es{C}_i)_{\sf{F}}) \simeq (\underset{i \in I}{\colim}~\es{C}_i, \underset{i \in I}{\colim}~(\es{C}_i)_{\sf{B}}, \underset{i \in I}{\colim}~(\es{C}_i)_{\sf{F}}).
\]
This follows essentially from the fact that on the level of mapping spaces, filtered colimits commute with pullbacks. The span functor also commutes with filtered colimits\footnote{There is another way to prove this result: observe that the statement in question is equivalent to the twisted arrow $\infty$-category $\sf{Tw}^r$ preserving compact objects and prove that an adequate triple  $(\es{C}, \es{C}_{\sf{B}}, \es{C}_{\sf{F}})$ is compact if $\es{C}$, $\es{C}_{\sf{B}}$ and, $\es{C}_{\sf{F}}$ are compact in $\sf{Cat}_\infty$. This observation follows from the morphism spaces in $\sf{AdTrip}$ being a finite limit.}.

\begin{lem}\label{lem: span preserves filtered colimits}
The functor 
\[
\sf{Span}(-) : \sf{Cat}_\infty^{\rm lex} \longrightarrow \sf{Cat}_\infty,
\]
preserves filtered colimits. 
\end{lem}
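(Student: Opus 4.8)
The plan is to mirror the proof of \cref{lem: span preserves limits} by factoring the functor as the composite
\[
\sf{Cat}_\infty^{\rm lex} \hookrightarrow \sf{AdTrip} \xrightarrow{\ \sf{Span}(-)\ } \sf{Cat}_\infty,
\]
and showing that each factor preserves filtered colimits. For the left-hand inclusion this is essentially the levelwise description of filtered colimits in $\sf{AdTrip}$ recorded just above the statement. Concretely, I would first observe that a filtered colimit of finitely complete $\infty$-categories along left exact functors is again finitely complete, since finite limits commute with filtered colimits; hence $\sf{Cat}_\infty^{\rm lex}$ has filtered colimits and they are computed in $\sf{Cat}_\infty$. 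The inclusion then sends $\colim_{i \in I} \es{C}_i$ to the adequate triple $(\colim_i \es{C}_i, \colim_i \es{C}_i, \colim_i \es{C}_i)$, which is precisely the levelwise filtered colimit of the triples $(\es{C}_i, \es{C}_i, \es{C}_i)$, so the inclusion preserves filtered colimits.

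The genuine content lies in showing that the right-hand functor $\sf{Span}(-) : \sf{AdTrip} \to \sf{Cat}_\infty$ preserves filtered colimits, where the right-adjoint property is of no help and a direct comparison is needed. Using the levelwise formula for filtered colimits in $\sf{AdTrip}$ together with \cref{def: span category}, I would compare $\sf{Span}(\colim_i \es{C}_i)$ and $\colim_i \sf{Span}(\es{C}_i)$ on objects and on mapping spaces. On objects both are the filtered colimit of the underlying object sets. On mapping spaces, the definition gives $\Map_{\sf{Span}(\colim_i \es{C}_i)}(X,Y) = \bigl((\colim_i \es{C}_i)_{/\{X,Y\}}\bigr)^\simeq$, and the key point is that both the slice construction and the core $(-)^\simeq$ commute with filtered colimits, so this groupoid is equivalent to $\colim_i \bigl((\es{C}_i)_{/\{X_i,Y_i\}}\bigr)^\simeq = \colim_i \Map_{\sf{Span}(\es{C}_i)}(X_i,Y_i)$, which by the filtered-colimit mapping-space formula of \cref{subsection: colimits} is exactly $\Map_{\colim_i \sf{Span}(\es{C}_i)}(X,Y)$.

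The step I expect to be the main obstacle is the compatibility of this object-and-mapping-space comparison with the composition law. Since \cref{def: span category} only specifies composition up to coherent homotopy via pullback, producing a genuine equivalence of $\infty$-categories (rather than an equivalence on homotopy categories) requires the comparison to be coherent, and this is precisely where one must invoke that filtered colimits commute with pullbacks in $\sf{Cat}_\infty$. Rather than manipulating these coherences by hand, the cleanest route is the alternative sketched in the footnote, which I would adopt to finish. As $\sf{Span}(-)$ is right adjoint to $\sf{Tw}^r$, and $\sf{Cat}_\infty$ is compactly generated, $\sf{Span}(-)$ preserves filtered colimits if and only if $\sf{Tw}^r$ preserves compact objects. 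I would therefore show that $\sf{Tw}^r$ carries a compact $\infty$-category to a compact object of $\sf{AdTrip}$: a compact $\es{C}$ produces an adequate triple all three of whose constituent $\infty$-categories are compact in $\sf{Cat}_\infty$, and such a triple is compact because $\sf{AdTrip}$ sits inside $\Fun(\Lambda_2[2], \sf{Cat}_\infty)$ with morphism spaces computed as a finite limit of morphism spaces in $\sf{Cat}_\infty$. Since filtered colimits in $\sf{AdTrip}$ are levelwise and a finite limit of filtered-colimit-preserving functors again preserves filtered colimits, the corepresented functor $\Map_{\sf{AdTrip}}(\sf{Tw}^r(\es{C}), -)$ preserves filtered colimits, giving the required compactness and hence the lemma.
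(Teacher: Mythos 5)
You follow the alternative route sketched in the paper's footnote rather than the proof the paper actually gives: the paper factors the functor as $\sf{Cat}_\infty^{\rm lex} \hookrightarrow \sf{AdTrip} \xrightarrow{S_{\sf{Tw}^r}} \es{S}^{\Delta^{\op}} \xrightarrow{\sf{ac}} \sf{Cat}_\infty$, uses that $Q_\bullet(\es{C}) = \Map_{\sf{AdTrip}}(\sf{Tw}^r(\Delta[\bullet]), \es{C})$ is a complete Segal space, and reduces to showing $Q_0(-)$ and $Q_1(-)$ commute with filtered colimits. Your first two steps are sound: the inclusion $\sf{Cat}_\infty^{\rm lex} \hookrightarrow \sf{AdTrip}$ preserves filtered colimits, and, since $\sf{Cat}_\infty$ is compactly generated, $\sf{Span}(-)\colon \sf{AdTrip} \to \sf{Cat}_\infty$ preserves filtered colimits if and only if its left adjoint $\sf{Tw}^r$ sends compact objects of $\sf{Cat}_\infty$ to compact objects of $\sf{AdTrip}$. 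The gap lies in how you verify this last statement.

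Your claim that a compact $\es{C}$ ``produces an adequate triple all three of whose constituent $\infty$-categories are compact in $\sf{Cat}_\infty$'' is false. Take $\es{C} = \Delta[1] \amalg_{\partial\Delta[1]} \Delta[0]$, the walking endomorphism (one object with endomorphisms $x^{n}$, $n \geq 0$); this is a finite colimit of simplices, hence compact. The objects of $\sf{Tw}^r(\es{C})$ are the arrows $x^{n}$, and a morphism $x^{n} \to x^{m}$ is a factorisation $x^{m} = x^{b} x^{n} x^{a}$, which exists only when $n \leq m$; so the objects $x^{n}$ are pairwise non-equivalent. Writing $\sf{Tw}^r(\es{C})$ as the filtered union of its full subcategories $T_n$ on $\{x^{0}, \dots, x^{n}\}$ and using the description of filtered colimits in \cref{subsection: colimits}, compactness of $\sf{Tw}^r(\es{C})$ would force the identity to factor up to equivalence through some $T_n$, exhibiting a category with infinitely many pairwise non-equivalent objects as a retract of one with finitely many --- a contradiction. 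So the underlying category of $\sf{Tw}^r(\es{C})$ is not compact in $\sf{Cat}_\infty$, and constituent-wise compactness cannot be the reason $\sf{Tw}^r(\es{C})$ is compact in $\sf{AdTrip}$ (it is compact there, but that is essentially the lemma being proved, so you cannot invoke it). The repair is to use that $\sf{Tw}^r$, being a left adjoint, preserves all colimits and retracts, and that the compact objects of $\sf{Cat}_\infty$ are generated under finite colimits and retracts by the simplices $\Delta[n]$; it therefore suffices to show $\sf{Tw}^r(\Delta[n])$ is compact in $\sf{AdTrip}$. For these objects your argument does apply, since $\sf{Tw}^r(\Delta[n])$ and its two wide subcategories are finite posets (one should still note that $\sf{AdTrip}$ is a \emph{non-full} subcategory of $\Fun(\Lambda_2[2], \sf{Cat}_\infty)$, so $\Map_{\sf{AdTrip}}$ is only a union of components of the finite limit you describe, and one must check that the pullback-preservation condition is detected at a finite stage of the colimit). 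But proving that $\Map_{\sf{AdTrip}}(\sf{Tw}^r(\Delta[n]), -)$ commutes with filtered colimits is precisely the content of the paper's published proof, via the Segal decomposition into $Q_1 \times_{Q_0} \cdots \times_{Q_0} Q_1$; so the adjunction formalism does not let you bypass that computation, it only repackages it.
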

\begin{proof}
Following~\cite[Definition 2.12]{HHLN}, the functor in question may be deconstructed as a composite
\[
\sf{Cat}_\infty^{\rm lex} \hookrightarrow \sf{AdTrip} \xrightarrow{S_{\sf{Tw}^r}} \es{S}^{\Delta^\op} \xrightarrow{\sf{ac}} \sf{Cat}_\infty,
\]
where $\sf{ac}: \es{S}^{\Delta^\op} \to \sf{Cat}_\infty $ is the associated category functor which is left adjoint to the Rezk nerve $N: \sf{Cat}_\infty \to \es{S}^{\Delta^\op}$, and $S_{\sf{Tw}^r}$ is the singular complex functor associated to the twisted arrow $\infty$-category functor
\[
\sf{Tw}^r : \Delta \longrightarrow \sf{AdTrip}, \quad [n] \longmapsto \sf{Tw}^r(\Delta[n]).
\]
see e.g.,~\cite[Construction 2.10]{HHLN}.

It will suffice to show that each functor in this composite preserves filtered colimits. The right-hand functor is a left adjoint so preserves filtered colimits, and the left-hand functor preserves filtered colimits by the above discussion on how filtered colimits are computed for adequate triples, hence the proof reduces to the claim that the singular complex
\[
S_{\sf{Tw}^r} : \sf{AdTrip} \longrightarrow \es{S}^{\Delta^\op},
\]
preserves filtered colimits, i.e., that for a filtered diagram $\es{C}_{(-)}: I \to \sf{AdTrip}$, the canonical map 
\[
S_{\sf{Tw}^r}(\underset{i \in I}{\colim}~ \es{C}_i) \simeq \underset{i \in I}{\colim}~S_{\sf{Tw}^r}(\es{C}_i),
\]
is an equivalence of simplicial spaces. Since colimits of simplicial spaces are computed levelwise, it suffices to show that the canonical map
\[
S_{\sf{Tw}^r}(\underset{i \in I}{\colim}~ \es{C}_i)[n] \simeq \underset{i \in I}{\colim}~S_{\sf{Tw}^r}(\es{C}_i)[n],
\]
of spaces is an equivalence for each $[n] \in \Delta$.
Unravelling the definition we see that the singular complex functor is given by
\[
(\es{C}, \es{C}_{\sf{B}}, \es{C}_{\sf{F}}) \longmapsto ([n] \longmapsto \Map_{\sf{AdTrip}}(\sf{Tw}^r(\Delta[n]), (\es{C}, \es{C}_{\sf{B}}, \es{C}_{\sf{F}})) = Q_\bullet(\es{C})
\]
where we have chosen the compact notation $Q_\bullet(\es{C})$ in reference to the simplicial objects relationship with the $Q$-construction. The simplicial space $Q_\bullet(\es{C})$ is a complete Segal space by \cite[Theorem 2.13]{HHLN}, i.e., there are equivalences
\[
Q_n(\es{C}) \simeq Q_1(\es{C}) \times_{Q_0(\es{C})} \cdots \times_{Q_0(\es{C})} Q_1(\es{C})
\]
which are compatible with the face and degeneracy maps. Since filtered colimits commute with pullbacks in spaces, it suffices to show that the functors $Q_0(-)$ and $Q_1(-)$ commute with filtered colimits.

The functor $Q_0(-): \sf{AdTrip} \to \es{S}$ is equivalent to the composite
\[
\sf{AdTrip} \xrightarrow{ \rm forget} \sf{Cat}_\infty \xrightarrow{(-)^\simeq} \es{S},
\]
both of which commute with filtered colimits: the left-hand functor by the fact that filtered colimits of adequate triples are computed in $\sf{Cat}_\infty$ and the right-hand functor by the fact that the groupoid core functor may be described as $(-)^{\simeq} \simeq \Map_{\sf{Cat}_\infty}(\ast, -)$ since it is right adjoint to the inclusion $\es{S} \hookrightarrow \sf{Cat}_\infty$ and $\ast$ is compact as an $\infty$-category by~\cite[\href{https://kerodon.net/tag/0689}{Proposition 0689}]{kerodon}.

For the case of the functor $Q_1(-): \sf{AdTrip} \to \es{S}$ it suffices to show that given a map 
\[
\Lambda_2[2] \simeq \sf{Tw}^r([1]) \longrightarrow \underset{i \in I}{\colim}~ \es{C}_i
\]
of adequate triples, for a filtered diagram $\es{C}_{(-)}: I \to \sf{AdTrip}$, then the forward and backward legs land in $(\es{C}_i)_B$ and $(\es{C}_i)_F$ for some $i \in I$. This follows since filtered colimits of adequate triples are computed in $\sf{Cat}_\infty$.
\end{proof}

\begin{thm}\label{thm: span as colim}
Let $G$ be a profinite group. The collection of inflation functors running over the set of open and normal subgroups of $G$ assembles into an equivalence of $\infty$-categories
\[
\underset{N \trianglelefteq_o G}{\colim}~\sf{Span} (\Fin_{G/N}) \xrightarrow{\ \simeq \ } \sf{Span}(\Fin_G^\delta),
\]
between the colimit (in $\sf{Cat}_\infty$) of the span $\infty$-categories of finite $G/N$-sets and the span $\infty$-category of finite discrete $G$-sets.
\end{thm}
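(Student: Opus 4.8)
The plan is to apply the functor $\sf{Span}(-)$ to the equivalence of \cref{thm: GSet as colim} and to invoke \cref{lem: span preserves filtered colimits}. The diagram $N \mapsto \Fin_{G/N}$ is indexed by the poset of open normal subgroups of $G$ with transition maps given by the inflation functors, and this indexing category is filtered: for open normal $N, M \trianglelefteq_o G$ the intersection $N \cap M$ is again open and normal and receives inflation maps from both $N$ and $M$, providing the required upper bound. First I would check that this diagram lands in $\sf{Cat}_\infty^{\rm lex}$. Each $\Fin_{G/N}$ admits finite limits --- the one-point set with trivial action is terminal, and pullbacks are computed on underlying finite sets with the induced action --- as does $\Fin_G^\delta$ (the stabiliser of a point in a pullback contains an intersection of two open subgroups, hence is open). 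Moreover each inflation functor is left exact, since on underlying sets it is the identity and pullbacks of discrete $G/N$-sets are formed on underlying sets.

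The step requiring the most care is the compatibility between the two ambient $\infty$-categories: \cref{thm: GSet as colim} computes the colimit in $\sf{Cat}_\infty$, whereas \cref{lem: span preserves filtered colimits} concerns colimits in $\sf{Cat}_\infty^{\rm lex}$. I would resolve this by noting that finite limits commute with filtered colimits, so the forgetful functor $\sf{Cat}_\infty^{\rm lex} \to \sf{Cat}_\infty$ preserves filtered colimits; consequently \cref{thm: GSet as colim} already exhibits $\colim_{N \trianglelefteq_o G} \Fin_{G/N} \simeq \Fin_G^\delta$ as an equivalence in $\sf{Cat}_\infty^{\rm lex}$. Applying $\sf{Span}(-)$ and using \cref{lem: span preserves filtered colimits} then gives
\[
\underset{N \trianglelefteq_o G}{\colim}~\sf{Span}(\Fin_{G/N}) \simeq \sf{Span}\left(\underset{N \trianglelefteq_o G}{\colim}~\Fin_{G/N}\right) \simeq \sf{Span}(\Fin_G^\delta).
\]

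It remains to identify the comparison map with the one induced by inflation. By functoriality of $\sf{Span}(-)$ applied to the left exact inflation functors, the transition maps and cocone legs of the colimit $\underset{N \trianglelefteq_o G}{\colim}~\sf{Span}(\Fin_{G/N})$ are precisely the spans of the inflation functors, whose action on objects is inflation; hence the displayed equivalence is the asserted one. I note that, in contrast with the limit statement of \cref{thm: span as limit}, this argument uses no Ind-completion and no passage to $\sf{Pr}^R$, which is the structural reason the colimit and limit equivalences are not formally dual to one another.
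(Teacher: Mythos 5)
Your proof is correct and follows essentially the same route as the paper, whose own proof is a one-line combination of \cref{thm: GSet as colim} and \cref{lem: span preserves filtered colimits}. The additional care you take---checking the diagram lands in $\sf{Cat}_\infty^{\rm lex}$, noting that the forgetful functor $\sf{Cat}_\infty^{\rm lex} \to \sf{Cat}_\infty$ preserves filtered colimits so the two ambient colimits agree, and identifying the cocone legs via functoriality of $\sf{Span}(-)$---fills in steps the paper leaves implicit (the paper instead cites \cref{lem: cocone in PrL is cone in PrR} to identify the functors, which is less apt here since no adjoints or presentability are in play).
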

\begin{proof}
This follows by combining~\cref{thm: GSet as colim} and~\cref{lem: span preserves filtered colimits} and using \cref{lem: cocone in PrL is cone in PrR} to identify the functors.
\end{proof}

\section{Mackey functors for profinite groups}

As an application of the theory we consider Mackey functors for profinite groups. For this we need to introduce some terminology. An $\infty$-category $\es{C}$ is \emph{semiadditive} if it has a notion of direct sums, i.e., if $\es{C}$ is pointed, has both products and coproducts, and the canonical map
\[
\coprod_{i \in I} C_i \longrightarrow \prod_{i \in I} C_i,
\]
from a finite coproduct of objects of $\es{C}$ to a finite product of objects of $\es{C}$ is an equivalence. We denote by $\sf{Cat}_\infty^{\rm sadd}$ the sub-$\infty$-category of $\sf{Cat}_\infty$ spanned by semiadditive $\infty$-categories and biproduct preserving functors. An $\infty$-category $\es{C}$ is said to be \emph{disjunctive} if it has finite limits, finite coproducts and the latter are disjoint and universal, i.e., if the functor
\[
\prod_{i \in I} \es{C}_{/ C_i} \longrightarrow  \es{C}_{/ \coprod_{i \in I}C_i}
\]
given by the coproduct to be an equivalence, see e.g.,~\cite[Definition 4.2]{Barwick} and the references therein. We denote by $\sf{Cat}_\infty^{\rm disj}$ the sub-$\infty$-category of $\sf{Cat}_\infty$ spanned by the disjunctive $\infty$-categories and functors which preserve pullbacks and finite coproducts.

The key reason for considering disjunctive $\infty$-categories is that the span functor sends disjunctive $\infty$-categories to semiadditive $\infty$-categories, see e.g.,~\cite[Proposition 4.3]{Barwick}. This allows us to make the following definition, following~\cite[Definition 6.1]{Barwick}, see also~\cite[Notation 6.3]{Barwick}, keeping in mind that loc. cit. deals with the situation in much more generality that we are currently.

\begin{definition}
Let $\es{C}$ be a disjunctive $\infty$-category and $\es{E}$ a semiadditive $\infty$-category. The \emph{$\infty$-category $\sf{Mack}(\es{C}; \es{E}) $ of $\es{E}$-valued Mackey functors on $\es{C}$} to be the $\infty$-category $\Fun^\oplus (\sf{Span}(\es{C}), \es{E})$ of (co)product preserving functors\footnote{The $\infty$-category $\Fun^\oplus (\sf{Span}(\es{C}), \es{E})$ is defined as the full subcategory of $\Fun(\sf{Span}(\es{C}), \es{E})$ spanned by the (co)product preserving functors.} from the span category of $\es{C}$ to $\es{E}$
\end{definition}

The $\infty$-category of $\es{E}$-valued Mackey functors on $\es{C}$ has all limits and colimits that $\es{E}$ has, and they are computed levelwise, see e.g.,~\cite[Corollary 6.5.1]{Barwick}. Assuming $\es{E}$ is bicomplete, the $\infty$-category of Mackey functors is a smashing\footnote{Technically, \cite[Lemma 3.5]{BarwickGlasmanShah} only proves that the $\infty$-category of Mackey functors is a localization. To see that it is smashing, i.e., that the localization functor preserves all colimits, one has to observe that the fully faithful inclusion preserves all colimits, but this last is precisely the content of~\cite[Proposition 6.5]{Barwick}.} localization~\cite[Lemma 3.5]{BarwickGlasmanShah} of the $\infty$-category $\Fun(\sf{Span}(\es{C}), \es{E})$. As such, the $\infty$-category of $\es{E}$-valued Mackey functors in $\es{C}$ is presentable if $\es{E}$ is presentable. From now on, we will assume that $\es{E}$ is presentable and semiadditive. A functor $F: \es{C} \to \es{D}$ between disjunctive $\infty$-categories (which necessarily preserves finite coproducts and pullbacks) induces an adjoint pair
\[
\adjunction{F_!}{\sf{Mack}(\es{C}; \es{E})}{\sf{Mack}(\es{C}; \es{E})}{F^\ast},
\]
in which the right adjoint is given by restriction along the induced functor 
\[
\sf{Span}(F): \sf{Span}(\es{C}) \to \sf{Span}(\es{D}),
\]
and the left adjoint exists via the Adjoint Functor Theorem~\cite[Corollary 5.5.2.9]{HTT}, see e.g.,~\cite[Notation 6.9]{Barwick}. 

\begin{rem}\label{rem: adjoints}
Suppose $F: \es{C} \to \es{D}$ as above has a right adjoint $G: \es{D} \to \es{C}$. In general, $\sf{Span}(F)$ need not be left adjoint to $\sf{Span}(G)$ and hence $F^\ast$ need not be equivalent to $G_!$ on the level of Mackey functors. In fact, $\sf{Span}(F)$ is left adjoint to $\sf{Span}(G)$ if the squares 
% https://q.uiver.app/#q=WzAsOCxbMCwwLCJYIl0sWzAsMSwiWCciXSxbMSwwLCJHRihYKSJdLFsxLDEsIkdGKFgnKSJdLFszLDAsIkZHKFkpIl0sWzQsMCwiWSJdLFszLDEsIkZHKFknKSJdLFs0LDEsIlknIl0sWzAsMiwiXFxldGEiXSxbMSwzLCJcXGV0YSJdLFswLDEsImYiLDJdLFsyLDMsIkdGKGYpIl0sWzQsNiwiRkcoZykiLDJdLFs0LDUsIlxcdmFyZXBzaWxvbiJdLFs2LDcsIlxcdmFyZXBzaWxvbiJdLFs1LDcsImciXV0=
\[\begin{tikzcd}
	X & {GF(X)} && {FG(Y)} & Y \\
	{X'} & {GF(X')} && {FG(Y')} & {Y'}
	\arrow["\eta", from=1-1, to=1-2]
	\arrow["f"', from=1-1, to=2-1]
	\arrow["{GF(f)}", from=1-2, to=2-2]
	\arrow["\varepsilon", from=1-4, to=1-5]
	\arrow["{FG(g)}"', from=1-4, to=2-4]
	\arrow["g", from=1-5, to=2-5]
	\arrow["\eta", from=2-1, to=2-2]
	\arrow["\varepsilon", from=2-4, to=2-5]
\end{tikzcd}\]
are pullbacks for every map $f: X \to X'$ in $\es{C}$ and $g: Y \to Y'$ in $\es{D}$. We find ourselves in the situation where we do not have adjoint pairs at the level of span categories when considering the adjunction between inflation and fixed points functors: for the adjunction
\[
\adjunction{\inf_{G/N}^G}{\Fin_{G/N}}{\Fin_G}{(-)^N}, 
\]
the corresponding diagrams are
% https://q.uiver.app/#q=WzAsOCxbMCwwLCJYIl0sWzAsMSwiWCciXSxbMSwwLCIoXFxpbmZfe0cvTn1eR1gpXk4iXSxbMSwxLCIoXFxpbmZfe0cvTn1eR1gnKV5OIl0sWzMsMCwiXFxpbmZfe0cvTn1eRyhZXk4pIl0sWzQsMCwiWSJdLFszLDEsIlxcaW5mX3tHL059XkcoKFknKV5OKSJdLFs0LDEsIlknIl0sWzAsMiwiXFxldGEiXSxbMSwzLCJcXGV0YSJdLFswLDEsImYiLDJdLFsyLDMsIihcXGluZl97Ry9OfV5HKGYpKV5OIl0sWzQsNiwiXFxpbmZfe0cvTn1eRyhnXk4pIiwyXSxbNCw1LCJcXHZhcmVwc2lsb24iXSxbNiw3LCJcXHZhcmVwc2lsb24iXSxbNSw3LCJnIl1d
\[\begin{tikzcd}
	X & {(\inf_{G/N}^GX)^N} && {\inf_{G/N}^G(Y^N)} & Y \\
	{X'} & {(\inf_{G/N}^GX')^N} && {\inf_{G/N}^G((Y')^N)} & {Y'}
	\arrow["\eta", from=1-1, to=1-2]
	\arrow["f"', from=1-1, to=2-1]
	\arrow["{(\inf_{G/N}^G(f))^N}", from=1-2, to=2-2]
	\arrow["\varepsilon", from=1-4, to=1-5]
	\arrow["{\inf_{G/N}^G(g^N)}"', from=1-4, to=2-4]
	\arrow["g", from=1-5, to=2-5]
	\arrow["\eta", from=2-1, to=2-2]
	\arrow["\varepsilon", from=2-4, to=2-5]
\end{tikzcd}\]
where left-hand square above is a pullback since the unit is an isomorphism, but the right-hand square need not be a pullback since the counit is only an injection.
\end{rem}

The aim of this section is to provide a (co)limit description for the category of Mackey functors on a profinite group. Since Mackey functors are defined in terms of product preserving functors, we will require the following lemma. 

\begin{lem}\label{lem: colim prod preserving}
Let $\es{E}$ be a (possibly large) $\infty$-category with finite products and let $\es{C}_{(-)}: I \to \sf{Cat}_\infty^{\times}$ be a filtered diagram of small $\infty$-categories with finite products. A functor $F: \underset{i \in I}{\colim}~\es{C}_i \to \es{E}$ is product preserving if and only if each component $F_i : \es{C}_i \to \es{E}$ is product preserving. 
\end{lem}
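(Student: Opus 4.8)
The plan is to reduce both implications to a single structural fact: in a filtered colimit of $\infty$-categories with finite products along \emph{product-preserving} transition functors, finite products are computed at finite stages. Writing $\iota_i \colon \es{C}_i \to \underset{i \in I}{\colim}~\es{C}_i$ for the canonical functors, so that $F_i = F \circ \iota_i$, I would first establish the following key claim: each $\iota_i$ preserves finite products, and more precisely, if $X, Y$ are objects of the colimit represented by $X_k, Y_k \in \es{C}_k$ at a common stage $k$, then $\iota_k(X_k \times Y_k)$ is a product of $X$ and $Y$ in the colimit, with projections the images under $\iota_k$ of the projections in $\es{C}_k$. (Here I use that the transition functors, being morphisms in $\sf{Cat}_\infty^{\times}$, preserve finite products, so in particular $(X_k \times Y_k)_l \simeq X_l \times Y_l$ at any later stage $l$.)

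To prove this key claim I would test against an arbitrary object $Z$ of the colimit, represented by some $Z_m \in \es{C}_m$, and apply the mapping-space formula for filtered colimits of $\infty$-categories established above. Choosing stages $l \geq k, m$ and using product-preservation of the transition functors, one obtains
\[
\Map(Z, \iota_k(X_k \times Y_k)) \simeq \colim_{l} \Map_{\es{C}_l}(Z_l, X_l \times Y_l) \simeq \colim_{l} \big( \Map_{\es{C}_l}(Z_l, X_l) \times \Map_{\es{C}_l}(Z_l, Y_l) \big),
\]
where the unlabelled $\Map$ denotes mapping spaces in the colimit. The crucial input, and the step I expect to carry the real weight of the argument, is that filtered colimits commute with finite products in the $\infty$-category of spaces; this lets me pull the product outside the colimit and identify the right-hand side with $\Map(Z, X) \times \Map(Z, Y)$. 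Tracking naturality in $Z$ shows that this equivalence is induced by the projections, i.e. it is the canonical product comparison map, so $\iota_k(X_k \times Y_k)$ genuinely satisfies the universal property of the product. The terminal object (the empty product) is handled identically, using that product-preserving transition functors preserve terminal objects.

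With the key claim in hand, both directions become formal. For the ``only if'' implication, if $F$ preserves finite products then so does each $F_i = F \circ \iota_i$, being a composite of product-preserving functors. For the ``if'' implication, given objects $X, Y$ of the colimit I represent them at a common stage $k$; by the key claim $X \times Y \simeq \iota_k(X_k \times Y_k)$ with projections induced from those in $\es{C}_k$, so the canonical comparison map $F(X \times Y) \to F(X) \times F(Y)$ is identified with the comparison map $F_k(X_k \times Y_k) \to F_k(X_k) \times F_k(Y_k)$, which is an equivalence precisely because $F_k$ preserves products. The same ``common stage'' representation handles the terminal object, and since every finite product is built from binary products together with the terminal object, this establishes that $F$ preserves all finite products.
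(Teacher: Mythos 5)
Your proposal is correct and follows essentially the same route as the paper's proof: establish that in a filtered colimit along product-preserving transition functors, finite products are computed at a common finite stage (so the canonical functors $\iota_k$ preserve products), after which the ``only if'' direction is closure of product-preserving functors under composition and the ``if'' direction is the chain of identifications $F(X \times Y) \simeq F_k(X_k \times Y_k) \simeq F_k(X_k) \times F_k(Y_k) \simeq F(X) \times F(Y)$. The only difference is that you actually verify the key claim via the mapping-space formula for filtered colimits and the commutation of filtered colimits with finite products in spaces, whereas the paper simply asserts that the colimit ``inherits'' products in this way; your version is the more complete one.
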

\begin{proof}
Let $\es{C}_{(-)} : I \to \sf{Cat}_\infty^\times$ be a filtered diagram of $\infty$-categories with (finite) products and product preserving maps. The colimit $\underset{i \in I}{\colim}~\es{C}_i$ inherits products as follows. For objects $X$ and $Y$ in $\underset{i \in I}{\colim}~\es{C}_i$, there exists $i,j \in I$ such that $X = \eta_i(X_i)$ and $Y = \eta_{j}(Y_j)$ for some $X_i \in \es{C}_i$ and $Y_{j} \in \es{C}_j$, where $\eta_k : \es{C}_k \to \underset{i \in I}{\colim}~\es{C}_i$ is the canonical map. Since $I$ is filtered, there exists (at least one) $k \in I$ with $i,j \leq k$ such that $X= \eta_k(X_k)$ and $Y = \eta_k(Y_k)$ for $X_k, Y_k \in \es{C}_k$. The product $X \times Y$ is then given by
\[
X \times Y = \eta_k(X_k \times Y_k).
\]
In particular, by definition, the canonical maps 
\[
\eta_k : \es{C}_k \longrightarrow \underset{i \in I}{\colim}~\es{C}_i
\]
are product preserving. 

Now consider the composite:
% https://q.uiver.app/#q=WzAsMyxbMCwwLCJcXGVze0N9X2kgIl0sWzEsMCwiXFx1bmRlcnNldHtpIFxcaW4gSX17XFxjb2xpbX1+XFxlc3tDfV9pIl0sWzIsMCwiXFxlc3tFfSJdLFswLDEsIlxcZXRhX2kiXSxbMSwyLCJGIl0sWzAsMiwiRl9pIiwyLHsib2Zmc2V0IjoxLCJjdXJ2ZSI6M31dXQ==
\[\begin{tikzcd}
	{\es{C}_i } & {\underset{i \in I}{\colim}~\es{C}_i} & {\es{E}}
	\arrow["{\eta_i}", from=1-1, to=1-2]
	\arrow["{F_i}"', shift right, curve={height=18pt}, from=1-1, to=1-3]
	\arrow["F", from=1-2, to=1-3]
\end{tikzcd}\]
which realises the equivalence of~\cref{lem: functor cats and colims}. If $F$ is product preserving, then so is $F_i$ for each $i \in I$ as product preserving maps are closed under composition. 

For the converse, assume for each $i \in I$ that the map $F_i : \es{C}_i \to \es{E}$ is product preserving. Then for $X$ and $Y$ objects of $\underset{i \in I}{\colim}~\es{C}_i$, there exits $k \in I$ such that
\begin{align*}
F(X \times Y) &\simeq F(\eta_k(X_k \times Y_k)) \\
&\simeq  F_k(X_k \times Y_k) \\
&\simeq  F_k(X_k) \times F_k(Y_k) \\
&\simeq  F(\eta_k(X_k)) \times F(\eta_k(X_k)) \\ &\simeq  F(X) \times F(Y)  
\end{align*}
where the first equivalence follows by how products are computed in the colimit category, the second equivalence is the definition of $F_k$ as a composite, the third equivalence uses the fact that $F_k$ is assumed to be product preserving, the penultimate equivalence follows again from the definition of $F_k$ as a composite and the finial equivalence follows from the describe of the objects of the colimit category, when the indexed category is filtered, see~\cref{subsection: colimits}.
\end{proof}

The technical heart of the proof of the main theorem of this section (\cref{thm: Mackey functors as a limit}) is the following observation that the functor $\Fun^\oplus(-, \es{E})$ preserves limits.

\begin{lem}\label{lem: AdFun preserves lims}
Let $\es{E}$ be a presentable semiadditive $\infty$-category and let $\es{C}_{(-)}: I \to \sf{Cat}_\infty^{\rm sadd}$ be a filtered diagram of semiadditive $\infty$-categories. There is an equivalence of $\infty$-categories
\[
\Fun^\oplus(\underset{i \in I}{\colim}~ \es{C}_i,  \es{E}) \simeq \lim_{i \in I} \Fun^\oplus(\es{C}_i, \es{E})
\]
between $\es{E}$-valued product-preserving functors on the colimit (in $\sf{Cat}_\infty$) of $\es{C}_{(-)}$ and the limit (in $\sf{Pr}^R$) of the categories of $\es{E}$-valued functors on $\es{C}_i$. 
\end{lem}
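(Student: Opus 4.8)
The plan is to obtain the desired equivalence by cutting down the equivalence on full functor categories to the product-preserving subcategories. Since the diagram $\es{C}_{(-)}$ lands in $\sf{Cat}_\infty^{\rm sadd} \subseteq \sf{Cat}_\infty$ and $\es{E}$ is presentable, the corollary to \cref{lem: functor cats and colims} already furnishes an equivalence $\Fun(\colim_i \es{C}_i, \es{E}) \simeq \lim_i \Fun(\es{C}_i, \es{E})$, with the limit on the right taken in $\sf{Pr}^R$. Under this equivalence a functor $F$ on the colimit corresponds to the compatible family of its restrictions $(F_i)_i = (F \circ \eta_i)_i$ along the structure maps $\eta_i \colon \es{C}_i \to \colim_j \es{C}_j$. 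First I would observe that $\Fun^\oplus(\colim_i \es{C}_i, \es{E})$ is, by definition, the full subcategory of $\Fun(\colim_i \es{C}_i, \es{E})$ spanned by the product-preserving functors, and that by \cref{lem: colim prod preserving} such an $F$ is product-preserving if and only if each restriction $F_i$ is. Thus, after transport along the equivalence, the left-hand side of the claimed equivalence is identified with the full subcategory of $\lim_i \Fun(\es{C}_i, \es{E})$ spanned by those compatible families all of whose components are product-preserving.

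It then remains to identify $\lim_i \Fun^\oplus(\es{C}_i, \es{E})$ with precisely this full subcategory. The key structural input is that limits of $\infty$-categories send levelwise fully faithful maps of diagrams to fully faithful maps. Concretely, the inclusions $\Fun^\oplus(\es{C}_i, \es{E}) \hookrightarrow \Fun(\es{C}_i, \es{E})$ are fully faithful and, since each transition map $\es{C}_i \to \es{C}_j$ is biproduct preserving, they are compatible with the restriction functors; hence they assemble into a map of diagrams over $I$. Using that mapping spaces in a limit of $\infty$-categories are computed as the limit of the mapping spaces of the components, full faithfulness of each inclusion gives full faithfulness of the induced map $\lim_i \Fun^\oplus(\es{C}_i, \es{E}) \to \lim_i \Fun(\es{C}_i, \es{E})$. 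Moreover, because the transition functors send $\Fun^\oplus(\es{C}_j, \es{E})$ into $\Fun^\oplus(\es{C}_i, \es{E})$, a compatible family lies in the essential image of this inclusion exactly when each of its components is product-preserving. This identifies $\lim_i \Fun^\oplus(\es{C}_i, \es{E})$ with the full subcategory described at the end of the previous paragraph, so the equivalence of full functor categories restricts to the asserted equivalence.

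Finally, a small amount of bookkeeping confirms that the limit may be formed in $\sf{Pr}^R$, as stated. For $\es{E}$ presentable and $\es{C}_i$ small, $\Fun^\oplus(\es{C}_i, \es{E})$ is an accessible, limit-closed and filtered-colimit-closed full subcategory of the presentable category $\Fun(\es{C}_i, \es{E})$, hence is itself presentable; the restriction functors preserve limits and filtered colimits and are therefore right adjoints by the adjoint functor theorem, so the diagram $i \mapsto \Fun^\oplus(\es{C}_i, \es{E})$ lands in $\sf{Pr}^R$. Since the inclusion $\sf{Pr}^R \hookrightarrow \widehat{\sf{Cat}}_\infty$ preserves limits, the limit computed above agrees with the limit in $\sf{Pr}^R$. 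I expect the main obstacle to be precisely the structural claim of the second paragraph---that passing to the limit preserves the relevant full-subcategory inclusions and correctly identifies the essential image---since this is where one must argue carefully with the mapping-space description of limits of $\infty$-categories and the compatibility of the subcategories with the cocartesian transport; the remaining steps are formal once \cref{lem: colim prod preserving} and \cref{lem: functor cats and colims} are in hand.
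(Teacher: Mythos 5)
Your proposal is correct and takes essentially the same route as the paper: both restrict the equivalence $\Fun(\colim_i \es{C}_i, \es{E}) \simeq \lim_i \Fun(\es{C}_i, \es{E})$ of \cref{lem: functor cats and colims} to the product-preserving subcategories, with \cref{lem: colim prod preserving} supplying the key fact that a functor on the colimit is product-preserving exactly when all its restrictions are. The only cosmetic difference is organisational — the paper checks essential surjectivity and full faithfulness of the comparison functor directly, whereas you identify both sides with the same full subcategory of $\lim_i \Fun(\es{C}_i, \es{E})$ — and your closing $\sf{Pr}^R$ bookkeeping makes explicit a point the paper leaves implicit.
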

\begin{proof}
Consider the commutative diagram
% https://q.uiver.app/#q=WzAsNCxbMCwwLCJcXEZ1bihcXHVuZGVyc2V0e2kgXFxpbiBJfXtcXGNvbGltfVxcZXN7Q31faSwgXFxlc3tFfSkgIl0sWzAsMSwiXFxGdW5eXFxvcGx1cyhcXHVuZGVyc2V0e2kgXFxpbiBJfXtcXGNvbGltfVxcZXN7Q31faSwgXFxlc3tFfSkgIl0sWzEsMCwiXFx1bmRlcnNldHtpIFxcaW4gSX17XFxsaW19flxcRnVuKFxcZXN7Q31faSwgXFxlc3tFfSkgIl0sWzEsMSwiXFx1bmRlcnNldHtpIFxcaW4gSX17XFxsaW19flxcRnVuXlxcb3BsdXNcXGVze0N9X2ksIFxcZXN7RX0pICJdLFsxLDNdLFswLDIsIlxcc2ltZXEiXSxbMSwwLCIiLDEseyJzdHlsZSI6eyJ0YWlsIjp7Im5hbWUiOiJob29rIiwic2lkZSI6InRvcCJ9fX1dLFszLDIsIiIsMSx7InN0eWxlIjp7InRhaWwiOnsibmFtZSI6Imhvb2siLCJzaWRlIjoidG9wIn19fV1d
\[\begin{tikzcd}
	{\Fun(\underset{i \in I}{\colim}~\es{C}_i, \es{E}) } & {\underset{i \in I}{\lim}~\Fun(\es{C}_i, \es{E}) } \\
	{\Fun^\oplus(\underset{i \in I}{\colim}~\es{C}_i, \es{E}) } & {\underset{i \in I}{\lim}~\Fun^\oplus(\es{C}_i, \es{E}) }
	\arrow["\simeq", from=1-1, to=1-2]
	\arrow[hook, from=2-1, to=1-1]
	\arrow[from=2-1, to=2-2]
	\arrow[hook, from=2-2, to=1-2]
\end{tikzcd}\]
in which the vertical arrows are fully faithful inclusions and the top-most horizontal arrow is an equivalence by~\cref{lem: functor cats and colims}. We wish to show that the lower horizontal arrow is an equivalence of (possibly large) $\infty$-categories.

For essential surjectivity, let $\{F_i\}_{i \in I}$ be an object in $\lim_{i \in I}\Fun^\oplus(\es{C}_i, \es{E})$ i.e., for each $i \in I$, the functor $F_i: \es{C}_i \to \es{E}$ preserves products and the collection of such functors forms a cocartesian section in the sense of~\cref{subsection: limits}. By embedding the cocartesian section $\{F_i\}_{i \in I}$ into the larger category $\lim_{i \in I}\Fun(\es{C}_i, \es{E})$, we find a functor
\[
F: \underset{i \in I}{\colim}~\es{C}_i \longrightarrow \es{E},
\]
whose image under the top equivalence is precisely $\{F_i\}_{i \in I}$. The functor $F$ is product preserving by~\cref{lem: colim prod preserving}, hence lifts to an object in $\Fun^\oplus(\underset{i \in I}{\colim}~\es{C}_i, \es{E})$ so that the bottom map is essentially surjective.

To see that the bottom arrow is fully faithful, observe that the composite 
\[
\Fun^\oplus(\underset{i \in I}{\colim}~\es{C}_i, \es{E}) \hookrightarrow \Fun(\underset{i \in I}{\colim}~\es{C}_i, \es{E}) \xrightarrow{ \ \simeq \ } \lim_{i \in I}\Fun(\es{C}_i, \es{E}) 
\]
is fully faithful as the composite of two fully faithful functors. As such, it suffices to prove that the image of this composite lands in $\lim_{i \in I}\Fun^\oplus(\es{C}_i, \es{E})$. This follows from~\cref{lem: colim prod preserving} since $F: \underset{i \in I}{\colim}~\es{C}_i \longrightarrow \es{E}$ is product preserving if and only if each composite
\[\begin{tikzcd}
	{\es{C}_i } & {\underset{i \in I}{\colim}~\es{C}_i} & {\es{E}}
	\arrow["{\eta_i}", from=1-1, to=1-2]
	\arrow["{F_i}"', shift right, curve={height=18pt}, from=1-1, to=1-3]
	\arrow["F", from=1-2, to=1-3]
\end{tikzcd}\]
is product preserving. 
\end{proof}

\begin{lem}\label{lem: semiadditive}
Let $\es{E}$ be a semiadditive $\infty$-category and let $\es{C}_{(-)}: I \to \sf{Cat}_\infty^{\rm disj}$ be a filtered diagram of disjunctive $\infty$-categories. There is an equivalence of $\infty$-categories
\[
\sf{Mack}(\underset{i \in I}{\colim}~ \es{C}_i~; \es{E}) \simeq \lim_{i \in I} \sf{Mack}(\es{C}_i~; \es{E})
\]
between $\es{E}$-valued Mackey functors on the colimit (in $\sf{Cat}_\infty$) of $\es{C}_{(-)}$ and the limit (in $\sf{Pr}^R$) of the categories of $\es{E}$-valued Mackey functors on $\es{C}_i$. 
\end{lem}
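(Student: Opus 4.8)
The plan is to unwind the definition $\sf{Mack}(\es{C}; \es{E}) = \Fun^\oplus(\sf{Span}(\es{C}), \es{E})$ and then chain the two preceding lemmas: \cref{lem: span preserves filtered colimits}, which pulls the span functor through the filtered colimit, and \cref{lem: AdFun preserves lims}, which converts the resulting colimit into a limit after applying $\Fun^\oplus(-, \es{E})$. Concretely, I would assemble the string of equivalences
\[
\sf{Mack}(\underset{i \in I}{\colim}~\es{C}_i~; \es{E}) = \Fun^\oplus(\sf{Span}(\underset{i \in I}{\colim}~\es{C}_i), \es{E}) \simeq \Fun^\oplus(\underset{i \in I}{\colim}~\sf{Span}(\es{C}_i), \es{E}) \simeq \lim_{i \in I} \Fun^\oplus(\sf{Span}(\es{C}_i), \es{E}) = \lim_{i \in I} \sf{Mack}(\es{C}_i~; \es{E}),
\]
where the first equivalence is \cref{lem: span preserves filtered colimits} and the second is \cref{lem: AdFun preserves lims}.

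Before chaining, I would check that the inputs to each lemma are met. Since a disjunctive $\infty$-category admits pullbacks, each $\es{C}_i$ is an object on which $\sf{Span}(-)$ is defined, and the transition functors preserve pullbacks, so they induce functors of span categories and \cref{lem: span preserves filtered colimits} applies. By \cite[Proposition 4.3]{Barwick} the span of a disjunctive $\infty$-category is semiadditive, and a pullback-and-coproduct-preserving functor induces a biproduct-preserving functor on spans; hence $\sf{Span} \circ \es{C}_{(-)}$ is a filtered diagram valued in $\sf{Cat}_\infty^{\rm sadd}$, precisely the input required by \cref{lem: AdFun preserves lims}. To realise the resulting limit in $\sf{Pr}^R$ one uses that $\es{E}$ is presentable so that each $\sf{Mack}(\es{C}_i; \es{E})$ is presentable, exactly as in \cref{lem: AdFun preserves lims}.

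The main obstacle I expect is bookkeeping of structure rather than any substantial new argument: I must ensure that the filtered colimit $\underset{i \in I}{\colim}~\es{C}_i$ computed in $\sf{Cat}_\infty$ is genuinely disjunctive (so that its span is semiadditive and $\Fun^\oplus$ is the correct notion there), and that the equivalence $\sf{Span}(\underset{i \in I}{\colim}~\es{C}_i) \simeq \underset{i \in I}{\colim}~\sf{Span}(\es{C}_i)$ of \cref{lem: span preserves filtered colimits} is compatible with the semiadditive structure. Both reduce to the single fact that filtered colimits in $\sf{Cat}_\infty$ commute with finite limits, since the disjointness and universality conditions defining disjunctiveness, as well as the pullbacks defining composition in spans, are themselves finite-limit conditions. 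Consequently no genuinely new difficulty arises beyond assembling the pieces already established.
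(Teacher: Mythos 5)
Your proposal matches the paper's own proof: both unwind $\sf{Mack}(-;\es{E}) = \Fun^\oplus(\sf{Span}(-),\es{E})$, use \cref{lem: span preserves filtered colimits} to identify $\sf{Span}(\underset{i \in I}{\colim}~\es{C}_i)$ with $\underset{i \in I}{\colim}~\sf{Span}(\es{C}_i)$, invoke \cite[Proposition 4.3]{Barwick} so that the resulting diagram lands in $\sf{Cat}_\infty^{\rm sadd}$, and then conclude with \cref{lem: AdFun preserves lims}. The only cosmetic difference is in the structural bookkeeping: where you appeal to filtered colimits commuting with finite limits, the paper checks that the inclusion $\sf{Cat}_\infty^{\rm sadd} \subseteq \sf{Cat}_\infty$ preserves filtered colimits by factoring it through $\sf{Cat}_\infty^{\times}$ — an immaterial variation.
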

\begin{proof}
Recall that the $\infty$-category of $\es{E}$-valued Mackey functor on a disjunctive $\infty$-category $\es{C}$ is defined as
\[
\sf{Mack}(\es{C}~; \es{E}) = \Fun^\oplus(\sf{Span}(\es{C}), \es{E}).
\]
Given a filtered diagram 
\[
\es{C}_{(-)}: I \to \sf{Cat}_\infty^{\rm disj}, \quad i \longmapsto \es{C}_i
\]
of disjunctive $\infty$-categories,~\cref{lem: span preserves filtered colimits} provides a filtered diagram 
\[
\sf{Span}(\es{C}_{(-)}): I \to \sf{Cat}_\infty^{\rm sadd}, \quad i \longmapsto \sf{Span}(\es{C}_i)
\]
of semiadditive $\infty$-categories since the inclusion $\sf{Cat}_\infty^{\rm sadd} \subseteq \sf{Cat}_\infty$ preserves filtered colimits. To see this last observation, note that the inclusion factors
\[
\sf{Cat}_\infty^{\rm sadd} \hookrightarrow \sf{Cat}_\infty^{\times} \hookrightarrow \sf{Cat}_\infty
\]
through the $\infty$-category $\sf{Cat}_\infty^{\times}$ of $\infty$-categories with finite products. The inclusion
\[
\sf{Cat}_\infty^{\rm sadd} \hookrightarrow \sf{Cat}_\infty^{\times}
\]
preserves all colimits since it is a left adjoint, with right adjoint given by taking commutative monoid objects. The inclusion 
\[
\sf{Cat}_\infty^{\times} \hookrightarrow \sf{Cat}_\infty
\]
preserves filtered colimits by the formula for the product in a filtered colimit of $\infty$-categories with finite products, and hence the composite inclusion also preserves filtered colimits. The proof is now complete by an application of~\cref{lem: AdFun preserves lims}.
\end{proof}

We now specialise to the case of interest, namely profinite groups. 

\begin{thm}\label{thm: Mackey functors as a limit}
Let $\es{E}$ be a semiadditive $\infty$-category and let $G$ be a profinite group. The collection of categorical fixed point functors running over the set of open and normal subgroups of $G$ assembles into an equivalence of $\infty$-categories
\[
\sf{Mack}(\Fin^\delta_G~; \es{E}) \xrightarrow{\ \simeq \ } \lim_{N \leq_o G} \sf{Mack}(\Fin_{G/N}~; \es{E})
\]
between the $\infty$-category of $G$-Mackey functors and the limit (in $\sf{Pr}^R$) of the $\infty$-categories of $G/N$-Mackey functors.
\end{thm}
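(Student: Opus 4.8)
The plan is to reduce the statement to the already-established machinery rather than proving anything from scratch, since essentially all the hard work has been done in the preceding lemmas. The key observation is that the theorem is the special case of \cref{lem: semiadditive} where the filtered diagram is the poset of open normal subgroups and the disjunctive $\infty$-categories are the categories of finite $G/N$-sets. So the first step is to verify that this diagram genuinely lands in $\sf{Cat}_\infty^{\rm disj}$: each $\Fin_{G/N}$ is disjunctive (it has finite limits, finite coproducts given by disjoint union, and these are disjoint and universal, as is standard for categories of $G$-sets), and the inflation functors $\inf_{G/N}^{G/M}$ preserve pullbacks and finite coproducts. The poset $(\{N \trianglelefteq_o G\}, \subseteq)$ is filtered (in fact cofiltered as a poset, but filtered for our indexing since we run the colimit along inflation) because any two open normal subgroups have an intersection that is again open and normal.

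Second, I would invoke \cref{thm: GSet as colim} to identify the colimit of the diagram: it gives
\[
\underset{N \trianglelefteq_o G}{\colim}~\Fin_{G/N} \simeq \Fin_G^\delta
\]
in $\sf{Cat}_\infty$. Feeding this through \cref{lem: semiadditive} with $\es{C}_i = \Fin_{G/N}$ yields immediately
\[
\sf{Mack}(\Fin_G^\delta~; \es{E}) \simeq \lim_{N \leq_o G} \sf{Mack}(\Fin_{G/N}~; \es{E}),
\]
which is precisely the claimed equivalence, with the limit computed in $\sf{Pr}^R$. The final bookkeeping step is to identify the comparison functor with the collection of categorical fixed point functors: under the equivalence $\sf{Span}(\inf): \sf{Span}(\Fin_{G/N}) \to \sf{Span}(\Fin_G^\delta)$, restriction of Mackey functors along inflation is the transition map in the limit, and the right adjoint description from \cref{lem: cocone in PrL is cone in PrR} lets us name the induced functor on Mackey functors in terms of fixed points. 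I would note that on span categories inflation and fixed points are interchanged appropriately, so the restriction functor $\sf{Span}(\inf)^\ast$ corresponds to what one calls the categorical fixed point functor on Mackey functors.

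The main obstacle is purely the identification of the functors, not the existence of the equivalence. By \cref{rem: adjoints}, the inflation--fixed-points adjunction on underlying $G$-set categories does \emph{not} descend to an adjunction on span categories, because the relevant counit square fails to be a pullback (the counit is only an injection). This means one must be careful: the transition maps in the $\sf{Pr}^R$ limit are the restriction functors $\sf{Span}(\inf)^\ast$ induced by inflation on span categories, and calling the resulting comparison ``the categorical fixed point functor'' requires checking that this restriction agrees with the intended fixed-point construction on Mackey functors. I would handle this by appealing to \cref{lem: cocone in PrL is cone in PrR} to pass between the $\sf{Pr}^L$ and $\sf{Pr}^R$ pictures and matching the named functors against the canonical maps out of the colimit, rather than attempting to exhibit any adjunction between $\sf{Span}(\inf)$ and $\sf{Span}((-)^N)$ directly. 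Everything else is a direct citation of \cref{lem: semiadditive} and \cref{thm: GSet as colim}.
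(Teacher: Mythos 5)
Your proposal is correct and follows essentially the same route as the paper: the paper's proof is precisely ``combine \cref{lem: semiadditive} with \cref{thm: GSet as colim}, then identify the functors as categorical fixed points via \cref{lem: cocone in PrL is cone in PrR} together with \cite[\S B.7]{Barwick}.'' Your added verifications (disjunctivity of $\Fin_{G/N}$, filteredness of the poset of open normal subgroups, and the caution from \cref{rem: adjoints} that the inflation--fixed-points adjunction does not descend to span categories) are exactly the details the paper leaves implicit, with the citation of Barwick's \S B.7 playing the role of your final functor-identification step.
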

\begin{proof}
Combine~\cref{lem: semiadditive} with~\cref{thm: GSet as colim}. To identify the functors as categorical fixed points combine~\cref{lem: cocone in PrL is cone in PrR} with~\cite[\S B.7]{Barwick}.
\end{proof}

Taking $\es{E} =\s$ to be the $\infty$-category of spectra yields the following corollary upon identifying $G$-spectra with the category of spectral $G$-Mackey functors~\cite{Barwick, GuillouMay}. This gives an alternative proof of the existence of a \emph{continuous} model for the category of $G$-spectra due to the first author, Balchin and Barthel.~\cite[Theorem 6.5]{BBB}.

\begin{cor}
Let $G$ be a profinite group. The collection of categorical fixed point functors running over the set of open and normal subgroups of $G$ assembles into an equivalence of $\infty$-categories
\[
\s_G \xrightarrow{ \ \simeq \ } \lim_{N \leq_o G} \s_{G/N}
\]
between the $\infty$-category of $G$-spectra and the limit (in $\sf{Pr}^R$) of the $\infty$-categories of $G/N$-spectra.
\end{cor}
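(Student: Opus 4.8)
The plan is to deduce this directly from \cref{thm: Mackey functors as a limit} by specialising the coefficient category to $\es{E} = \s$ and then translating spectral Mackey functors into equivariant spectra. First I would verify that the $\infty$-category of spectra satisfies the hypotheses of \cref{thm: Mackey functors as a limit}: it is presentable, and being stable it is in particular semiadditive, so the theorem applies verbatim with $\es{E} = \s$. This yields an equivalence
\[
\sf{Mack}(\Fin_G^\delta~; \s) \xrightarrow{\ \simeq \ } \lim_{N \leq_o G} \sf{Mack}(\Fin_{G/N}~; \s)
\]
in $\sf{Pr}^R$, whose structure maps are the categorical fixed point functors supplied by that theorem.

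Next I would invoke the identification of equivariant spectra with spectral Mackey functors. For each open normal subgroup $N$ the quotient $G/N$ is a \emph{finite} group, so by the theorem of Guillou--May and Barwick recalled in the introduction there is an equivalence $\s_{G/N} \simeq \sf{Mack}(\Fin_{G/N}~; \s)$. For the profinite group $G$ itself, the $\infty$-category of $G$-spectra is, following Barwick's Example~B, the $\infty$-category $\sf{Mack}(\Fin_G^\delta~; \s)$ of spectral Mackey functors on finite discrete $G$-sets. Substituting these identifications into the displayed equivalence produces the claimed equivalence $\s_G \simeq \lim_{N \leq_o G} \s_{G/N}$.

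The one point I expect to require genuine care -- and hence the main obstacle -- is checking that under these identifications the transition maps of the limit are precisely the categorical (genuine) fixed point functors $\s_G \to \s_{G/N}$, and not some other comparison. On the Mackey-functor side these maps are given by restriction along $\sf{Span}(\inf_{G/N}^G)$ and, as \cref{rem: adjoints} emphasises, do not arise from a naive adjunction of span categories; matching them with the usual categorical fixed point functors on spectra is exactly the content of~\cite[\S B.7]{Barwick}, which was already used (together with \cref{lem: cocone in PrL is cone in PrR}) to identify the functors in \cref{thm: Mackey functors as a limit}. Granting this compatibility the corollary follows, and comparing the resulting presentation with~\cite[Theorem 6.5]{BBB} shows that our limit recovers their continuous model for the $\infty$-category of profinite $G$-spectra.
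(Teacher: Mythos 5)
Your proposal is correct and is essentially the paper's own proof: the paper likewise obtains the corollary by setting $\es{E} = \s$ in \cref{thm: Mackey functors as a limit} and invoking the identification of genuine equivariant spectra with spectral Mackey functors due to Barwick and Guillou--May, noting the comparison with the continuous model of~\cite[Theorem 6.5]{BBB}. Your additional care about identifying the transition maps as categorical fixed points via~\cite[\S B.7]{Barwick} is exactly the mechanism the paper already uses in the proof of \cref{thm: Mackey functors as a limit}, so nothing further is needed.
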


\printbibliography
\end{document}